\newcommand{\bbm}{\begin{bmatrix}}
\newcommand{\ebm}{\end{bmatrix}}
\newtheorem{corollary}{\bfseries Corollary}
\newtheorem{lemma}{\bfseries Lemma}
\newtheorem{remark}{\bfseries Remark}
\newtheorem{theorem}{\bfseries Theorem}
\definecolor{MG}{rgb}{0,0.45,0.08}
\title{\LARGE \bf
Consensus under Misaligned Orientations}
\author{
Hyo-Sung Ahn${}^{\dag}$, Minh Hoang Trinh${}^{\dag}$ \& Byung-Hun Lee${}^{\dag}$
\thanks{${}^{\dag}$School of Mechanical Engineering, Gwangju Institute of Science and Technology (GIST), 123 Cheomdan-gwagiro, Buk-gu, Gwangju, Republic of Korea. E-mails: {\tt\small hyosung@gist.ac.kr,trinhhoangminh@gist.ac.kr, bhlee@gist.ac.kr } }
}
\begin{document}
\maketitle
\thispagestyle{empty}
\pagestyle{empty}

\begin{abstract} 
This paper presents a consensus algorithm under misaligned orientations, which is defined as (i) misalignment to global coordinate frame of local coordinate frames, (ii) biases in control direction or sensing direction, or (iii) misaligned virtual global coordinate frames. After providing a mathematical formulation, we provide some sufficient conditions for consensus or for divergence. Besides the stability analysis, we also conduct some analysis for convergence characteristics in terms of locations of eigenvalues. Through a number of numerical simulations, we would attempt to understand the behaviors of misaligned consensus dynamics. 
\end{abstract}

\section{Introduction}
Consensus for multi-agent systems has been so widely studied for the last two decades \cite{jadbabaie2003coordination,olfati2007consensuspieee}. The consensus algorithms have been shown to be useful for various engineering applications such as mobile dispatch \cite{Tichakorn_icca_2010},  energy coordination in smart building \cite{Kim_2015_TCST}, smart grid \cite{Ziang_tps_2012}, and so on. There also have been so various research efforts in terms of theoretical developments. According to dynamic models, there are a number of different analyses for continuous time, discrete-time, cluster consensus, homogeneous, heterogeneous systems, stochastic, lower-order, high-order systems, etc. Meanwhile, according to the network topologies, directed, undirected, switching, and balanced graphs have been studied. Depending upon characteristics of sampling, synchronous, asynchronous, event-triggered, and quantized-based consensus algorithms also have been investigated. 

Given a network system, that is described by a graph $\mathcal{G} =(\mathcal{V}, \mathcal{E})$ where $\mathcal{V} = \{1, \ldots, n\}$ is the set of agents and $\mathcal{E}$ is the set of connectivities, however, the majority of existing consensus algorithms uses state information directly for control update. That is, when agent is modeled as $\dot{p}_i  = u_i(p_i, p_j),~j \in \mathcal{N}_i$, where $\mathcal{N}_i$ is the set of neighboring agents of agent $i$, for the control update $u_i(p_i, p_j)$, the agent $i$ uses the state information $p_i$ and $p_j$. Since the agent $i$ uses only its own state information $p_i$ along with neighboring information $p_j$, the consensus update $u_i(p_i, p_j)$ may be considered as decentralized. However, observing that the states $p_1, p_2, \ldots, p_n$ are all defined in a common global coordinate frame, we can notice that the control updates $u_1, u_2, \ldots, u_n$ are all defined in a common global coordinate frame. Thus, in this sense, most of existing consensus algorithms have been developed under the assumption of available global state information $p_i$ and $p_j$, or diffusively-coupling state information $p_i - p_j$. Note that the diffusive coupling state information $p_i - p_j$, where $(i,j) \in \mathcal{E}$, are defined in a common direction in general setups although it can be transformed into local frames (see (\ref{eq_distributed_measure}) in Section~\ref{section2}).

In this paper, we would like to study a consensus problem under the setup of misaligned information of $p_i - p_j$ that has been rarely examined in existing works. That is, in this paper, it is supposed that the common directions representing $p_i - p_j$  may be different according to the sensing or control capability of agents. We are mainly motivated to consider certain situations where agents' coordinate frames are misaligned, or sensing or control directions have been biased. In typical consensus algorithm given in (\ref{eq_consensus}), the summed diffusive coupling information $-\sum_{j \in \mathcal{N}_i} a_{ij} (p_i - p_j)$ are represented in a common direction, in ideal situations. However, it may be possible to imagine a circumstance where $-\sum_{j \in \mathcal{N}_i} a_{ij} (p_i - p_j)$ may be implemented in a wrong direction or biased direction, or the measurement $p_i - p_j$ may be biased. This paper seeks to find a consensus condition and attempts to understand the convergence characteristics under this circumstance.  
From literature search, it is observed that the consensus problem aforementioned has not been investigated directly; but some similar works have been studied. For examples, in \cite{weiren_cdc_2008,weiren_tac_2009}, the author studied a Cartesian coordinate coupling problem with a common coupling matrix $C$, and in \cite{Ramirez-Riberos_jgcd_2010}, they also studied a consensus problem with coupling multiplied by rotation matrix in $3$-D for cyclic formations. However, in \cite{weiren_cdc_2008,Ramirez-Riberos_jgcd_2010}, they did not consider a general case when agents have different misaligned orientation angles.  In distributed formation control, the orientation alignment problem has been key issues \cite{OhAhn_tac_2014,LeeAhn_automatica_2016}. It was shown that when orientations of agents are aligned, the desired formation could be achieved. Related with the orientation misalignment, a consensus with pursuit weight \cite{weiding_automatica_2010}, i.e., $k_{ij} e^{j \alpha_{ij}}$, may have some relevance. But, the pursuit system studied in \cite{weiding_automatica_2010} updates the control law after collecting the pursuit angles at each agent. In a different setup, which can be considered as orientation misalignment, rotation matrices were combined into the coupling terms in \cite{bhlee_icarcv_2016}. However, the ideas used in  \cite{weiding_automatica_2010,bhlee_icarcv_2016} do not consider the misalignment of control actions. Thus, since the misalignment angle is defined as the same one for each agent in our problem, the problems studied in \cite{weiding_automatica_2010,bhlee_icarcv_2016} are different from our current work. 
 
Consequently, the main contributions of this paper can be summarized as follows. First, we formulate a consensus problem with misaligned orientation angles  in which agents may implement control actions into misaligned directions, or update the consensus algorithms with misaligned measurements. We provide some conditions for consensus. Second, we also further conduct an analysis to estimate the locations of eigenvalues that are closely related with the convergence characteristics. Then, based on analyses, we conduct numerical simulations to understand the behaviors of agents in a better way. As far as the authors are concerned, even though there have been some related works, there has been no direct research for consensus under misaligned orientations.

This paper consists of as follows. In Section~\ref{section2}, we would like to formulate the consensus problem under misaligned orientations in a clear way. Then, in Section~\ref{section_analysis}, we provide analyses for consensus conditions and some related convergence characteristics. We present numerical illustrations in Section~\ref{section_examples} to validate the analyses and provide further discussions in Section~\ref{section_dis}. Conclusions will be presented in Section~\ref{section_conc}.


\section{Problem Formulation}\label{section2}
Let the global coordinate frame be denoted as $^g \Sigma$ and the $i$-th local coordinate frame as $^i \Sigma$. The position of agent $i$ is represented as $p_i \in \Bbb{R}^2$ in the global coordinate frame $^g \Sigma$, where $i \in \{1, \ldots, n\}$. Given two position vectors $p_i$ and $p_j$, the displacement vector between $p_i$ and $p_j$ is denoted as $z_{ij}= p_i - p_j$. The consensus algorithm for continuous-time linear systems is given as:
\begin{align}
\dot{p}_i &= -\sum_{j \in \mathcal{N}_i} a_{ij} (p_i - p_j) =  -\sum_{j \in \mathcal{N}_i} a_{ij} z_{ij}  \label{eq_consensus}
\end{align}
where $a_{ij} = a_{ji} \in \{0, 1\}$ depending upon the connectivity of undirected graphs. 
The consensus problem (\ref{eq_consensus}) can be rewritten as 
\begin{align}
{^{i}\dot{p}}_i &= -\sum_{j \in \mathcal{N}_i} a_{ij} ({^{i}p}_i - {^{i}p}_j) = \sum_{j \in \mathcal{N}_i} a_{ij} {^{i}p}_j \label{eq_distributed_measure}
\end{align}
which controls the movement of agent in its own local coordinate frame $^i \Sigma$. Obviously, the convergence property and stability  of (\ref{eq_consensus}) and (\ref{eq_distributed_measure}) are equivalent. However, the vector $z_{ij}$ is expressed in the global coordinate frame $^g \Sigma$, while the vector ${^{i}p}_j$ is expressed in the local coordinate frame $^i \Sigma$. Hence, the update of consensus algorithm could be done in two different scenarios. The first scenario, described in (\ref{eq_consensus}), is to use the vector $z_{ij}$ in orientation aligned coordinate frame, which is the local coordinate frame aligned to the global frame. In Fig.~\ref{problem_formulation}, the orientation aligned coordinate frames are denoted as ${^{g^i} \Sigma}$ (i.e., in this figure, they are $^{g^1} \Sigma$, $^{g^2} \Sigma$, and $^{g^3} \Sigma$ in red color). Thus, in the first approach, the orientation needs to be aligned. The second scenario, described in (\ref{eq_distributed_measure}), is to use ${^{i}p}_j$ in the local coordinate frame $^i \Sigma$, whose orientation is not aligned to the global frame. In Fig.~\ref{problem_formulation}, the local coordinate frames are denoted as $^1 \Sigma$, $^2 \Sigma$, and $^3 \Sigma$ in blue color. The first scenario (\ref{eq_consensus}) requires a constraint of alignment, while the second scenario (\ref{eq_distributed_measure}) is free from this constraint. Thus, the second scenario is considered to be of more distributed in the sense that the vector ${^{i}p}_j$ can be defined in local coordinate frames directly. It may be worthy of considering another scenario as follows: the local coordinate frames are not aligned; but agents have measured the positions $p_i$ in global coordinate frame (ex. using GPS information). In such case, they have to use $z_{ij}$ in the global coordinate frame; but since the orientations are not aligned to $^g \Sigma$, they need to sense the orientation angles or they need to find the direction of $^g \Sigma$ to calculate the orientation angles. In Fig.~\ref{problem_formulation}, the orientations are denoted as $\phi_i$. Then, using the obtained orientation angles $\phi_i$, agents can virtually rotate its local coordinate frame to global coordinate frame. 
Under this scenario, the coordinate frames ${^{g^i} \Sigma}$ in Fig.~\ref{problem_formulation} can be considered virtually aligned global coordinate frame.\footnote{Note that with available orientation angles $\phi_i$, the vectors $z_{ij}$ and ${^{i}p}_j$ are related by
\begin{align}
{^{i}p}_i - {^{i}p}_j  =  D(-\phi_i)(p_i - p_j)  = D(-\phi_i) z_{ij} = - {^{i}p}_j \label{eq_local_global}
\end{align}
where $ D(-\phi_i)$ is the rotation matrix. Thus, in terms of analysis, the three scenarios are equivalent.}
Then, the agent can implement the measurement $p_i - p_j = z_{ij}$ into the virtual coordinate frame ${^{g^i} \Sigma}$ for updating the consensus algorithm in global coordinate frame. This scenario may be considered the third scenario. 
Once again, note that the three scenarios have the same convergence and stability properties. In literature, there have been no distinctions between these three scenarios; however, in terms of sensing or in terms of implementation, they are essentially  different and should be distinguished. 
\begin{figure*}[t]
\centering
\includegraphics[width=8cm]{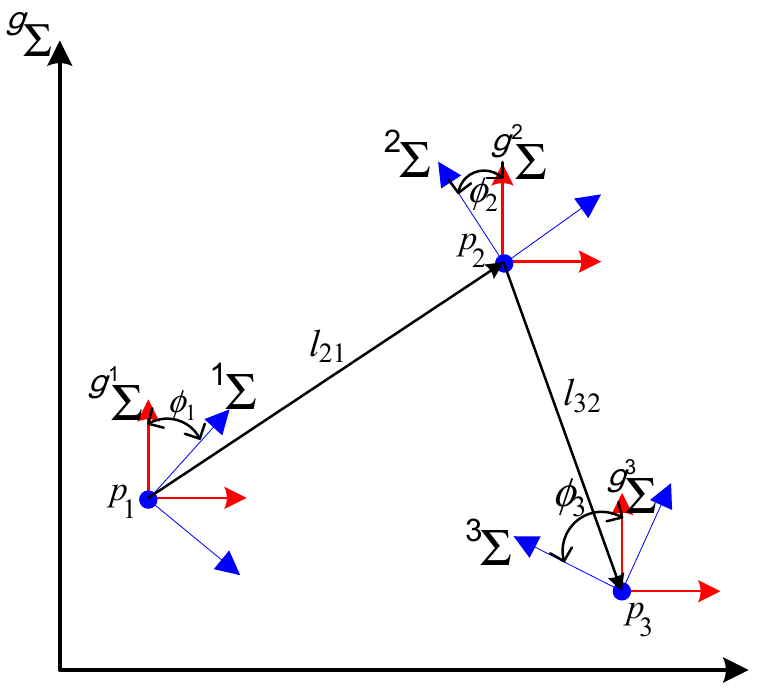}
\caption{Global coordinate frame $^g \Sigma$, local coordinate frames $^i \Sigma$, virtually aligned global coordinate frames ${^{g^i} \Sigma}$, and orientation angles $\phi_i$.} \label{problem_formulation}
\end{figure*}

As the main motivation of this paper, let us suppose that there are some errors in the orientation alignment, or orientation estimation in these three scenarios. Under the first scenario, the orientations of aligned local coordinate frames $^{g^i} \Sigma$, which are supposed to be aligned to the global frame $^g \Sigma$, may be not aligned as illustrated in Fig.~\ref{cood_virtual-1}. That is, the local coordinate frame $^{g^i} \Sigma$, which is supposed to be aligned to $^g \Sigma$, has in fact misalignment error as much as $\theta_i$. Under the second scenario, the agent $i$ may measure the direction of neighboring agent $j$ with angle error $\theta_i$. In Fig.~\ref{cood_virtual-2}, the agent $1$ measures the agent $2$ with direction error $\theta_1$. This can be interpreted as an error in control direction. Even though the displacement measurement is correct, during the implementation process, the agent may provide control command to wrong direction with orientation bias $\theta_i$. Under the third scenario, it can be assumed that the virtually aligned global coordinate frames have misalignment errors as $\theta_i$ as also illustrated in Fig.~\ref{cood_virtual-3}. Then, the third scenario would have the same alignment problem as the first scenario.

\begin{figure*}[t]
\centering
\subfigure[Misalignment orientation errors in scenario $1$.]{\includegraphics[width=5cm]{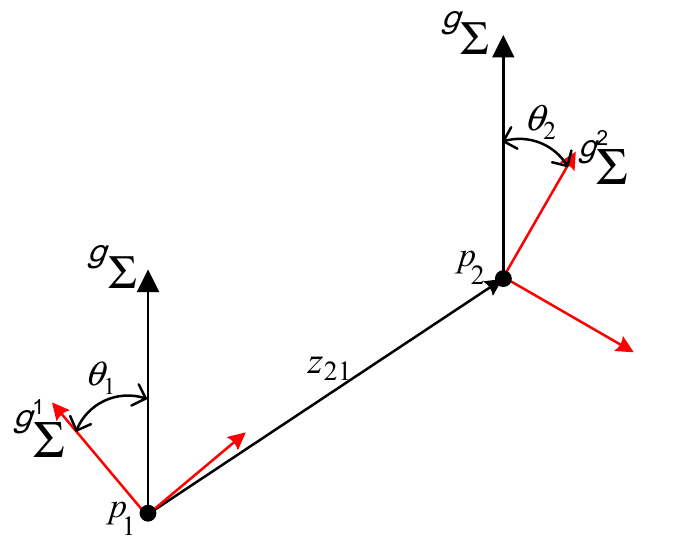}\label{cood_virtual-1}}
\subfigure[Misaligned orientation biases in scenario $2$.]{\includegraphics[width=5cm]{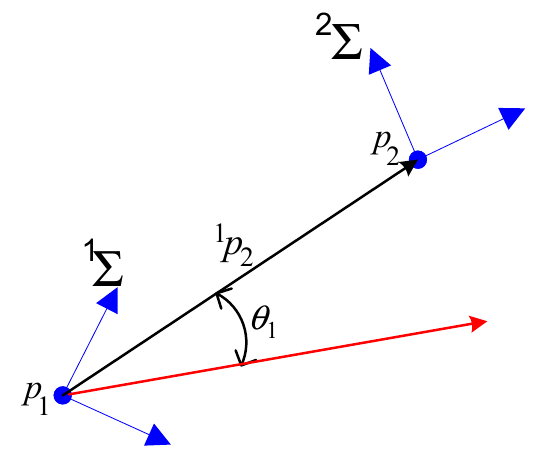}\label{cood_virtual-2}}
\subfigure[Misalignment orientation errors in scenario $3$.]{\includegraphics[width=5.1cm]{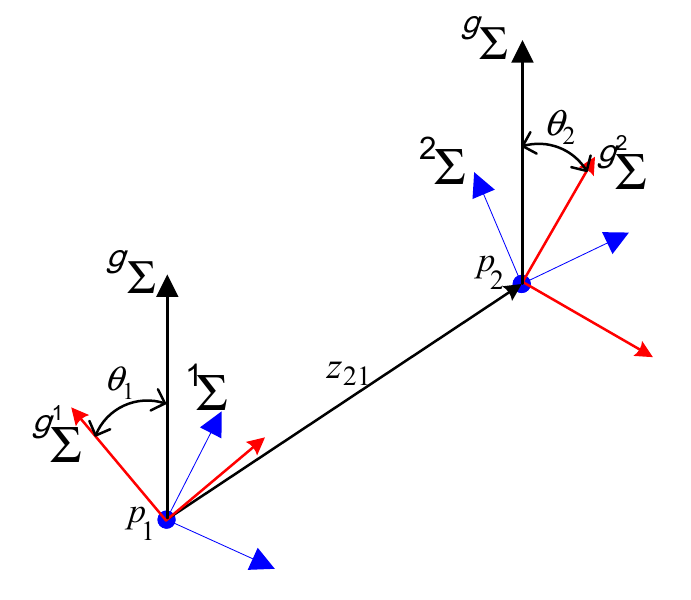}\label{cood_virtual-3}}\caption{Misalignment errors in three scenarios.}\label{cood_virtual}
\end{figure*}

Without notational confusion, the orientation errors or biases considered in the above three scenarios are called \textit{misaligned orientation errors}. The misaligned orientation errors may be due to errors in alignment to the global coordinate frame, biases in local sensing or in control direction, or errors in virtual alignment.  
With the presence of the error $\theta_i$, during the implementation, it can be represented in wrong directions. For example, it may be represented in the misaligned virtual coordinate frames as depicted in Fig.~\ref{cood_virtual-3}.  Thus, the control inputs transformed to the global coordinate frame $^g\Sigma$ would be defined as
\begin{align}
\dot{p}_i &= -D(\theta_i) \sum_{j \in \mathcal{N}_i} a_{ij} (p_i - p_j) \nonumber\\
           &= -\sum_{j \in \mathcal{N}_i} a_{ij} D(\theta_i)(p_i - p_j)  \label{eq_consensus_misaligned}
\end{align}
Note that in the above equation, $D(\theta_i)$ is a $SO(2)$ rotation matrix and the graph can be considered as directed, although the neighboring agents $i$ and $j$ mutually sense each other and exchange the information, since the weights for edges have the relationship $a_{ij} D(\theta_i) \neq  a_{ji} D(\theta_j)$ even though $a_{ij} = a_{ji}$. Then if we consider $a_{ij} D(\theta_i)$ as edge weighing, it is a matrix-weighted directed graph \cite{MinhAhn_arXiv2017,Minh_auto_submitted2017}. Let us define adjacency edge-matrix as $A_{ij} =  a_{ij} D(\theta_i)$. Then, the adjacency matrix is given as
\begin{align}
\mathcal{A} &=[ A_{ij} ] \nonumber\\
  &= \left[\begin{array}{ccccc}
                       0 & a_{12} D(\theta_1) & a_{13}D(\theta_1) & \cdots & a_{1n}D(\theta_1) \\
                       a_{21} D(\theta_2)& 0 & a_{23} D(\theta_2) & \cdots & a_{2n}D(\theta_2) \\
                       \vdots &    &          &    &  \vdots  \\
                       a_{n1} D(\theta_n) & \cdots  &   &  &  0    
                       \end{array}  \right] \in \Bbb{R}^{2n \times 2n}
\end{align}
For agent $i$, we define out-degree matrix as $D_i^{out} = \sum_{j \in \mathcal{N}_i} a_{ij} D(\theta_i)$ and in-degree matrix as $D_i^{in} = \sum_{j \in \mathcal{N}_i} a_{ji} D(\theta_j)$. Sincerely it is clear that ${D}_i^{out} \neq {D}_i^{in}$, it is not balanced. Then, we can define a block out-degree matrix of $\mathcal{G}$ as $\mathcal{D}^{out} = \text{blkdiag} [  D_i^{out}  ]$. Then, the SO(2)-weighted Laplacian matrix can be generated as
\begin{align}
\mathcal{L}^{out} = \mathcal{D}^{out} -\mathcal{A}
\end{align}
Using the above Laplacian matrix, we can have the state propagation as 
\begin{align} \label{eq_laplacian}
\dot{p} = - \mathcal{L}^{out} p(t)
\end{align}
where $p=(p_1^T, p_2^T, \ldots, p_n^T)^T \in \Bbb{R}^{2n}$.

\section{Convergence and stability analysis} \label{section_analysis}
This section is dedicated to the convergence and stability analysis of the system (\ref{eq_laplacian}). The following lemmas are developed for convergence analysis.

\begin{lemma} \label{lemma_null}
The Laplacian matrix $\mathcal{L}^{out}$ has rank as $\text{rank}(\mathcal{L}^{out})= 2n -2$, and the null space is given as $\mathcal{N}(\mathcal{L}^{out})= \text{span}\{ {\underbrace{[1,0, 1,0, \cdots, 1, 0]}_{\triangleq {\bf}{1}^e \in \Bbb{R}^{2n}}}^T, {\underbrace{ [0, 1, 0, 1, \cdots, 0, 1]}_{\triangleq {\bf}{1}^o \in \Bbb{R}^{2n}}}^T\}$.
\end{lemma}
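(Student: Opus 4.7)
The plan is to reduce $\mathcal{L}^{out}$ to the ordinary scalar graph Laplacian by pulling out the rotation matrices as a nonsingular block-diagonal factor. The crucial observation is that in the $i$-th block row of $\mathcal{L}^{out}$, the rotation $D(\theta_i)$ is independent of $j$, since both the diagonal block $D_i^{out} = \sum_{j \in \mathcal{N}_i} a_{ij} D(\theta_i) = d_i D(\theta_i)$ and the off-diagonal blocks $-a_{ij} D(\theta_i)$ carry the same common factor $D(\theta_i)$. Here $d_i = \sum_{j \in \mathcal{N}_i} a_{ij}$ is the usual scalar degree. This lets me write
\begin{equation*}
\mathcal{L}^{out} = \mathcal{R}\,(L \otimes I_2), \qquad \mathcal{R} \triangleq \text{blkdiag}\bigl(D(\theta_1), D(\theta_2), \ldots, D(\theta_n)\bigr),
\end{equation*}
where $L \in \mathbb{R}^{n \times n}$ is the standard (unweighted) Laplacian of the undirected graph $\mathcal{G}$. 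A direct block-by-block check against the definition of $\mathcal{L}^{out}$ confirms the factorization.

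Next I would use that each $D(\theta_i)$ lies in $SO(2)$ and is therefore orthogonal, so $\mathcal{R}$ is invertible. Consequently left-multiplication by $\mathcal{R}$ preserves both rank and null space, giving $\rank(\mathcal{L}^{out}) = \rank(L \otimes I_2)$ and $\mathcal{N}(\mathcal{L}^{out}) = \mathcal{N}(L \otimes I_2)$. I would then invoke the standard fact (assumed implicitly via connectivity of $\mathcal{G}$, which is the usual standing hypothesis in consensus problems) that $\rank(L) = n-1$ and $\mathcal{N}(L) = \text{span}\{\mathbf{1}_n\}$, where $\mathbf{1}_n = [1,1,\ldots,1]^\tp \in \mathbb{R}^n$.

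Finally, by the product rules for Kronecker products, $\rank(L \otimes I_2) = \rank(L)\,\rank(I_2) = 2(n-1) = 2n-2$, and the null space satisfies $\mathcal{N}(L \otimes I_2) = \mathcal{N}(L) \otimes \mathbb{R}^2 = \text{span}\{\mathbf{1}_n \otimes e_1,\; \mathbf{1}_n \otimes e_2\}$, where $e_1,e_2$ is the canonical basis of $\mathbb{R}^2$. Recognizing that $\mathbf{1}_n \otimes e_1 = \mathbf{1}^e$ and $\mathbf{1}_n \otimes e_2 = \mathbf{1}^o$ completes the identification of the null space. The argument is short because the real work is hidden in the factorization; the only step requiring any thought is spotting that $D(\theta_i)$ comes out as a left factor uniformly across the $i$-th row, which is precisely what makes the misaligned Laplacian amenable to classical analysis despite losing symmetry and weight balance.
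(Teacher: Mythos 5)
Your proof is correct and rests on the same key observation as the paper's: each block row of $\mathcal{L}^{out}$ carries the common nonsingular factor $D(\theta_i)$, so the kernel coincides with that of the topology Laplacian $\mathcal{L}^{o}=L\otimes I_2$ (indeed, your factorization $\mathcal{L}^{out}=\mathcal{R}(L\otimes I_2)$ is exactly the decomposition $D_{D(\theta_i)}\mathcal{L}^{o}$ the paper introduces just after the lemma, and the paper's row-by-row cancellation of $D(\theta_i)$ is the pointwise version of your argument). Your packaging via Kronecker-product rank and null-space identities is tidier, and like the paper you implicitly rely on connectivity of $\mathcal{G}$, which you at least state explicitly.
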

\begin{proof}
Defining $q = c_1  {\bf}{1}^e + c_2  {\bf}{1}^o \triangleq q^e  + q_o$, where $c_1$ and $c_2$ are constants, we have $\mathcal{L}^{out} q =0$. Let us examine the converse. 
From (\ref{eq_consensus_misaligned}), equalizing $\dot{p}_i =0$, we will show that $\sum_{j \in \mathcal{N}_i} a_{ij} D(\theta_i)(p_i - p_j)=0$ only when $p_i = p_j$. Denoting $p_i = (x_i, y_i)^T$, we can write 
\begin{align}
\sum_{j \in \mathcal{N}_i} a_{ij} D(\theta_i)(p_i - p_j) &= \left[\begin{array} {cc}
                                                            \cos\theta_i & -\sin \theta_i \\
                                                            \sin\theta_i & \cos\theta_i \end{array} \right]      
\sum_{j \in \mathcal{N}_i} a_{ij} \left[\begin{array} {c}
                                   x_i - x_j \\
                                   y_i  - y_j 
                                   \end{array}\right] \nonumber\\
                          &=   \left[\begin{array} {cc}
                                                            \cos\theta_i & -\sin \theta_i \\
                                                            \sin\theta_i & \cos\theta_i \end{array} \right]      
 \left[\begin{array} {c}
                                  \sum_{j \in \mathcal{N}_i} a_{ij}  (x_i - x_j) \\
                                  \sum_{j \in \mathcal{N}_i} a_{ij} (y_i  - y_j) 
                                   \end{array}\right]      
\end{align}
Since the matrix $D(\theta_i)$ is non-singular, in order to get $\dot{p}_i =0$, we only need to have $\sum_{j \in \mathcal{N}_i} a_{ij}  (x_i - x_j) =0$ and 
$\sum_{j \in \mathcal{N}_i} a_{ij} (y_i  - y_j) =0$, which is the consensus in $x$-component and $y$-component respectively. Consequently, it proves that $\text{rank}(\mathcal{L}^{out})= 2n -2$.
\end{proof}

\begin{lemma}\label{lemma_zeroeig}
The set of eigenvalues of $\mathcal{L}^{out}$ contains only two zero eigenvalues corresponding to eigenvectors $\text{span}\{{\bf}{1}^e\}$ and $\text{span}\{{\bf}{1}^o\}$ respectively.
\end{lemma}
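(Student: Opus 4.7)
My plan is to bootstrap Lemma~\ref{lemma_null} by upgrading its rank statement into a claim about both the geometric and the algebraic multiplicity of the zero eigenvalue. Lemma~\ref{lemma_null} already gives $\dim\mathcal{N}(\mathcal{L}^{out})=2n-\text{rank}(\mathcal{L}^{out})=2$, with $\mathbf{1}^e$ and $\mathbf{1}^o$ a basis; this immediately supplies the geometric multiplicity $2$ and identifies the two eigenspaces as claimed. The real work is to exclude generalized eigenvectors of order $\ge 2$ at the eigenvalue $0$.

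The key tool I would use is the factorization $\mathcal{L}^{out}=\Theta\,(L\otimes I_2)$, where $\Theta=\text{blkdiag}(D(\theta_1),\ldots,D(\theta_n))$ is invertible (each $D(\theta_i)\in SO(2)$) and $L$ is the scalar Laplacian of the undirected graph $\mathcal{G}$, which is symmetric and positive semidefinite with $\mathcal{N}(L)=\text{span}\{\mathbf{1}_n\}$. A block-by-block check confirms $[\Theta(L\otimes I_2)]_{ij}=L_{ij}D(\theta_i)$, matching the definition of $\mathcal{L}^{out}$. This packages the ``geometry'' of the problem (the rotations) and the ``graph topology'' into two factors that can be handled separately.

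Suppose, toward a contradiction, that there exists a generalized eigenvector $v$ of order $2$, so $\mathcal{L}^{out}v=w$ for some $0\neq w\in\mathcal{N}(\mathcal{L}^{out})$. Substituting the factorization gives $(L\otimes I_2)v=\Theta^{-1}w$. Since $L\otimes I_2$ is symmetric, solvability forces $\Theta^{-1}w\perp\mathcal{N}(L\otimes I_2)=\text{span}\{\mathbf{1}^e,\mathbf{1}^o\}$. Writing $w=a\mathbf{1}^e+b\mathbf{1}^o$ and expanding the inner products using $D(-\theta_i)e_1=(\cos\theta_i,-\sin\theta_i)^\tp$ and $D(-\theta_i)e_2=(\sin\theta_i,\cos\theta_i)^\tp$ reduces the orthogonality conditions to a $2\times 2$ linear system in $(a,b)$ whose determinant equals $(\sum_i\cos\theta_i)^2+(\sum_i\sin\theta_i)^2$. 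When this quantity is nonzero, the system forces $a=b=0$, contradicting $w\neq 0$, so no order-$2$ generalized eigenvector exists and the algebraic multiplicity of $0$ is exactly $2$.

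The main obstacle is the degenerate case $\sum_i\cos\theta_i=\sum_i\sin\theta_i=0$: the $2\times 2$ determinant vanishes and genuine generalized eigenvectors can appear, so $0$ could have algebraic multiplicity greater than $2$. A cleaner perspective that makes this transparent is to identify $\mathbb{R}^2\cong\mathbb{C}$, in which case $\mathcal{L}^{out}$ reduces to the $n\times n$ complex matrix $L^{\mathbb{C}}=\text{diag}(e^{\mathrm{i}\theta_i})L$; the eigenvalues of $\mathcal{L}^{out}$ are exactly those of $L^{\mathbb{C}}$ together with their complex conjugates, and the nondegeneracy condition reappears as $\sum_i e^{-\mathrm{i}\theta_i}\neq 0$, which is precisely the requirement that $0$ be a simple eigenvalue of $L^{\mathbb{C}}$. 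A complete proof of Lemma~\ref{lemma_zeroeig} should therefore either carry this nondegeneracy as a standing genericity assumption on the misalignment angles $\{\theta_i\}$, or handle the balanced-rotation case separately.
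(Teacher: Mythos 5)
Your proof is correct and takes a genuinely different — and substantially more rigorous — route than the paper's. The paper's own proof merely writes out $\mathcal{L}^{out}$ and observes that the entries of each row sum to zero (separately over odd- and even-indexed columns), which shows only that $\mathbf{1}^e$ and $\mathbf{1}^o$ are null vectors; combined with Lemma~1 this yields geometric multiplicity two, but the algebraic multiplicity — the part you correctly identify as ``the real work'' — is never addressed. Your Fredholm-alternative argument on $(L\otimes I_2)v=\Theta^{-1}w$, using the symmetry of $L\otimes I_2$ and the invertibility of the block-rotation factor, is the right way to exclude a Jordan block at zero, and ruling out order-$2$ generalized eigenvectors does suffice (any longer chain would contain one). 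The determinant computation $(\sum_i\cos\theta_i)^2+(\sum_i\sin\theta_i)^2$ is correct. More importantly, the degenerate case you isolate is not a weakness of your argument but a genuine counterexample to the lemma as stated: for two connected agents with $\theta_1=0$, $\theta_2=\pi$, the paper's own formula in Section~V gives the spectrum $0,0,(\cos\theta_1+\cos\theta_2)\pm j(\sin\theta_1+\sin\theta_2)=0,0,0,0$, so zero has algebraic multiplicity four while the rank is still $2n-2=2$. The lemma therefore requires your nondegeneracy condition $\sum_i e^{j\theta_i}\neq 0$ (which is implied by, e.g., the hypothesis $0<\cos\theta_i$ of Theorem~1, but not by the lemma's empty hypotheses), and your proof supplies both the missing hypothesis and the missing argument. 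The complexification $L^{\mathbb{C}}=\mathrm{diag}(e^{j\theta_i})L$, whose spectrum together with its conjugate gives that of $\mathcal{L}^{out}$, is a cleaner lens than anything in the paper and would also simplify Theorems~2 and~3.
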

\begin{proof}
The Laplacian matrix is given as
\begin{align}
\mathcal{L}^{out} &= \left[\begin{array}{ccccc}
                       \sum_{j \in \mathcal{N}_1} a_{1j} D(\theta_1) & -a_{12} D(\theta_1) & -a_{13}D(\theta_1) & \cdots & -a_{1n}D(\theta_1) \\
                       -a_{21} D(\theta_2)& \sum_{j \in \mathcal{N}_2} a_{2j} D(\theta_2)   & -a_{23} D(\theta_2) & \cdots & -a_{2n}D(\theta_2) \\
                       \vdots &    &          &    &  \vdots  \\
                       -a_{n1} D(\theta_n) & \cdots  &   &  &  \sum_{j \in \mathcal{N}_n} a_{nj} D(\theta_n)    
                       \end{array}  \right]
\end{align}
Defining $m_i= \sum_{j=1}^n a_{ij}$, we can see that for each row vector, it has its diagonal component as $m_i \cos\theta_i$ and off diagonal components as $-m_i \sin\theta_i$, $a_{ij} \sin\theta_i , $ and $-a_{ij}\cos\theta_i$, $\forall j\in \mathcal{N}_i$, or $m_i \sin\theta_i$, $-a_{ij} \sin\theta_i , $ and $a_{ij}\cos\theta_i$, $\forall j\in \mathcal{N}_i$. Thus, the summation of elements of each row vector is zero, which completes the proof.
\end{proof}

It seems to be not trivial to show that all the eigenvalues of $\mathcal{L}^{out}$ are on the right-half plane (RHP) except two zero eigenvalues when $0 < \cos\theta_i \leq 1$. 
The following discussion and theorem are for this result. Let us decompose $\mathcal{L}^{out}$ as
\begin{align}
\mathcal{L}^{out} &= \left[\begin{array}{ccccc}
                        D(\theta_1) & 0 & 0 & \cdots & 0 \\
                       0 &  D(\theta_2)   & 0& \cdots & 0 \\
                       \vdots &    &    \ddots      &    &  \vdots  \\
                       0& \cdots  &   &  &   D(\theta_n)    
                       \end{array}  \right] \nonumber\\
                       & \times \left[\begin{array}{ccccc}
                       \sum_{j \in \mathcal{N}_1} a_{1j} I_2  & -a_{12} I_2 & -a_{13}I_2 & \cdots & -a_{1n}I_2 \\
                       -a_{21}I_2& \sum_{j \in \mathcal{N}_2} a_{2j}I_2  & -a_{23}I_2  & \cdots & -a_{2n}I_2 \\
                       \vdots &    &        \ddots    &    &  \vdots  \\
                       -a_{n1}I_2& \cdots  &   &  &  \sum_{j \in \mathcal{N}_n} a_{nj} I_2
                       \end{array}  \right] \nonumber\\
                       &\triangleq \text{blkdiag}[D(\theta_i)] \cdot \mathcal{L}^{o} \triangleq D_{D(\theta_i)} \cdot \mathcal{L}^{o}
\end{align}
where $D_{D(\theta_i)}$ is a block diagonal matrix and $\mathcal{L}^{o} = {\mathcal{L}^{o}}^T$ is the Laplacian matrix characterizing the topology only (let us call it \textit{topology Laplacian matrix}).  With the above decomposition, the dynamics (\ref{eq_laplacian}) is rewritten as
\begin{align} \label{eq_lzeta_maineq}
\dot{p} = - ( D_{D(\theta_i)} \mathcal{L}^{o} ) p(t)
\end{align}

\begin{theorem} Let $0 < \cos\theta_i \leq 1$. Then, the system described by (\ref{eq_laplacian}) is globally asymptotically stable to a consensus value and eigenvalues of $\mathcal{L}^{out}$ have positive-real parts except two zero eigenvalues. \label{theorem_consensus_main}
\end{theorem}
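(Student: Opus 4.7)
The plan is to exploit the factorization $\mathcal{L}^{out} = D_{D(\theta_i)} \mathcal{L}^{o}$ just introduced, where $\mathcal{L}^{o}$ is symmetric positive semi-definite with $\ker \mathcal{L}^{o} = \text{span}\{\mathbf{1}^e, \mathbf{1}^o\}$. I would use $\mathcal{L}^{o}$ both to build a Lyapunov function for the consensus statement and, through a factor-reversal trick, to transfer sign information to the nonzero spectrum of $\mathcal{L}^{out}$.

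For the asymptotic consensus part, I would take $V(p) = \tfrac{1}{2} p^\tp \mathcal{L}^{o} p \ge 0$, which vanishes exactly on the consensus subspace. Differentiating along (\ref{eq_lzeta_maineq}), setting $q := \mathcal{L}^{o} p$, and using that the symmetric part of $D(\theta_i)$ equals $\cos\theta_i\, I_2$, one gets
\begin{align*}
\dot V = -q^\tp D_{D(\theta_i)} q = -\sum_{i=1}^n \cos\theta_i\, \|q_i\|^2 \le 0,
\end{align*}
so the hypothesis $\cos\theta_i > 0$ enters at precisely the right step. LaSalle's invariance principle then identifies the largest invariant subset of $\{p : \dot V = 0\} = \{p : \mathcal{L}^{o} p = 0\} = \text{span}\{\mathbf{1}^e, \mathbf{1}^o\}$ as the consensus subspace itself (invariant because $\mathcal{L}^{out}$ annihilates it by Lemma~\ref{lemma_null}), yielding global convergence of $p(t)$ to a point in this subspace.

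For the spectral claim, I would factor $\mathcal{L}^{o} = L L$ with $L := (\mathcal{L}^{o})^{1/2}$ the symmetric PSD square root and write $D := D_{D(\theta_i)}$. By the standard $XY$--$YX$ observation, the nonzero eigenvalues of $\mathcal{L}^{out} = (DL)L$ coincide, with algebraic multiplicities, with those of $L(DL) = LDL$. For any eigenpair $(\mu, w)$ of $LDL$ with $\mu \ne 0$, taking the Hermitian part of $w^* LDL w = \mu \|w\|^2$ gives $\text{Re}(\mu)\|w\|^2 = w^* L P L w$, where $P := \text{blkdiag}(\cos\theta_i I_2) \succ 0$. The right-hand side is nonnegative, and vanishes only if $Lw = 0$, which would force $LDLw = 0$ and hence $\mu = 0$, a contradiction. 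Thus every nonzero eigenvalue of $\mathcal{L}^{out}$ has strictly positive real part, which together with Lemmas~\ref{lemma_null} and \ref{lemma_zeroeig} completes the spectral assertion.

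The main obstacle I foresee is the bookkeeping in the spectral step: although $\mathcal{L}^{out}$ is far from symmetric, ``conjugating'' by $L$ converts it into a matrix $LDL$ whose symmetric part $LPL$ is cleanly accretive, and the only mild subtleties are the well-known $XY$--$YX$ multiplicity statement and the availability of the symmetric square root of $\mathcal{L}^{o}$. The Lyapunov computation is otherwise routine once one recognizes that $\mathcal{L}^{o}$ --- not $I_{2n}$ --- is the correct quadratic form: the naive candidate $V = \tfrac{1}{2}\|p\|^2$ fails to yield a sign-definite $\dot V$ because the skew parts of the $D(\theta_i)$ survive.
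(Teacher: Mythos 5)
Your proof is correct, and its Lyapunov core is essentially the paper's own argument: the authors also use the quadratic form $p^T\mathcal{L}^{o}p$ (they write it as $(p')^T(U')^T\mathcal{L}^{o}U'p'$ with $p'=(U')^Tp$ and then prove $U'(U')^T\mathcal{L}^{o}U'(U')^T=\mathcal{L}^{o}$, so their $V$ reduces to yours up to a factor of $2$), and they reach the same identity $\dot V=-(\mathcal{L}^{o}p)^T\bigl(D_{D(\theta_i)}+D_{D(\theta_i)}^T\bigr)(\mathcal{L}^{o}p)\le 0$ with equality iff $\mathcal{L}^{o}p=0$. Your version is tidier in that it skips the $U'$ detour and isolates the one place the hypothesis enters, namely that the symmetric part of $D(\theta_i)$ is $\cos\theta_i I_2$. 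Where you genuinely depart from the paper is the spectral claim: the paper never argues it directly (it is left implicit in the convergence of the linear flow), whereas your similarity $\mathcal{L}^{out}=(DL)L\sim LDL$ with $L=(\mathcal{L}^{o})^{1/2}$ gives an explicit, correct proof that every nonzero eigenvalue has positive real part. With one more line --- $\ker(LDL)=\ker L$ (take real parts of $w^*LDLw=0$) and semisimplicity of $0$ for $LDL$ (if $LDLv=w\in\ker L$ then $\|w\|^2=w^*LDLv=(Lw)^*DLv=0$) --- your route also pins the algebraic multiplicity of the zero eigenvalue at exactly $2$, something Lemma~\ref{lemma_zeroeig} asserts but whose proof in the paper (row sums vanish) does not actually establish. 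One soft spot you share with the authors: the invariance step is invoked without noting that sublevel sets of $V$ are not compact; the clean fix is to observe $\dot V\le -4\lambda_3(\min_i\cos\theta_i)V$, so $\mathcal{L}^{o}p$ and hence $\dot p$ decay exponentially and $p(t)$ converges to a finite point of the consensus subspace.
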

\begin{proof}It is clear that the topology Laplacian matrix $\mathcal{L}^{o}$ has two eigenvectors $u_1$ and $u_2$ correspoding to two zero eigenvalues. Let the topology Laplacian matrix $\mathcal{L}^{o}$ be decomposed as $U^T \mathcal{L}^{o} U = \text{diag} (\lambda_1, \lambda_2, \ldots, \lambda_{2n})$
where $\lambda_1 = \lambda_2 = 0 < \lambda_3 =  \lambda_4 \leq \ldots \lambda_{2k-1} = \lambda_{2k} \ldots   \leq \lambda_{2n-1} = \lambda_{2n}$ and $U =[u_{1}, u_{2}, u_{3}, \ldots, u_{2n-1}, u_{2n}] \in \Bbb{R}^{2n \times 2n}$ is the orthogonal matrix composed of eigenvectors of $\mathcal{L}^{o}$. Thus, we have $u_1= c_3 q^e$ and $u_2 = c_4 q^o$ where $c_3$ and $c_4$ are constants, and $U^{-1} = U^T$. Let us define another matrix $U' =[u_{3}, \ldots, u_{2n-1}, u_{2n}]\in \Bbb{R}^{2n \times (2n-2)}$. Then, we can have
\begin{align}
U^T \mathcal{L}^{o} U = \left[ \begin{array}{cc}
                              0_{2 \times 2} & 0_{2 \times {2n-2}} \\
                              0_{{2n-2} \times 2} &  (U')^T \mathcal{L}^{o} U' \\
                              \end{array} \right]
\end{align}
It is clear that $(U')^T \mathcal{L}^{o} U'$ is positive definite. Let $p' =  (U')^T p  \in \Bbb{R}^{2n-2}$. Next, let us select a Lyapunov candidate as
\begin{align}
V = (p')^T (U')^T \mathcal{L}^{o} U' p'
\end{align}
The derivative is obtained as $\dot{V} = -(\dot{p}')^T (U')^T \mathcal{L}^{o} U' p' -  ({p}')^T (U')^T \mathcal{L}^{o} U' \dot{p}' = -(\dot{p})^T U' (U')^T \mathcal{L}^{o} U' (U')^T p $  $  -({p})^T  U' (U')^T \mathcal{L}^{o} U' (U')^T \dot{p}$. It can be shown that $U' (U')^T \mathcal{L}^{o} U' (U')^T = \mathcal{L}^{o}$. Let $[u_1, u_2]= z$ and $[u_1, u_2]^T= z^T$. Then, from $U= [z, U']$, we have $U U^T = z z^T + U' {U'}^T = I_{2n \times 2n}$, which means that $U' {U'}^T = I_{2n \times 2n} - z z^T$. Hence $U' (U')^T \mathcal{L}^{o} U' (U')^T = ( I_{2n \times 2n} - z z^T ) \mathcal{L}^{o} ( I_{2n \times 2n} - z z^T ) = ( I_{2n \times 2n} - z z^T ) (\mathcal{L}^{o}  - \mathcal{L}^{o} z z^T ) = ( I_{2n \times 2n} - z z^T ) \mathcal{L}^{o} = \mathcal{L}^{o} - z z^T \mathcal{L}^{o} = \mathcal{L}^{o}$ due to $\mathcal{L}^{o} z =0$ and $z^T \mathcal{L}^{o}=0$.
Therefore, we obtain
\begin{align}
\dot{V}  &= - (\dot{p})^T \mathcal{L}^{o} p - ({p})^T \mathcal{L}^{o} \dot{p} \nonumber\\ 
         &= -p^T {\mathcal{L}^{o}}^T D_{D(\theta_i)}^T \mathcal{L}^{o} p   -  ({p})^T \mathcal{L}^{o} D_{D(\theta_i)} \mathcal{L}^{o} p \nonumber\\
         & = -(\mathcal{L}^{o} p)^T \left[ D_{D(\theta_i)}^T  + D_{D(\theta_i)}\right] (\mathcal{L}^{o} p) \leq 0
\end{align}
The last inequality holds since $D_{D(\theta_i)}^T  + {D(\theta_i)} >0$ if  $0 < \cos\theta_i \leq 1$. It is now clear that $(\mathcal{L}^{o} p)^T ( D_{D(\theta_i)}^T  + D_{D(\theta_i)} ) (\mathcal{L}^{o} p) = 0$ if and only if $\mathcal{L}^{o} p=0$, which implies that $p$ converges to a consensus and does not diverge because of $\dot{p}=0$. 
\end{proof}

The above \textit{Theorem~\ref{theorem_consensus_main}} provides the condition for the consensus when $0 < \cos\theta_i \leq 1$. Let some $\theta_i$ be negative as $\theta_i <0$. Then some diagonal elements of $D_{D(\theta_i)}^T  + D_{D(\theta_i)}$ will be negative. However, even with some negative diagonal elements of $D_{D(\theta_i)}^T  + D_{D(\theta_i)}$, $\dot{V} >0$ is not ensured. Thus, even with some negative $\theta_i$, the instability is not ensured. Also, the \textit{Theorem~\ref{theorem_consensus_main}} does not provide the locations of eigenvalues, which is related with the convergence characteristics. To evaluate the location of eigenvalues, we use Gershgorin circle for block matrix \cite{Feingold_PJM_1962,Sluis_LAA_1979}, which is summarized as follows:
\begin{lemma} \label{Gershgorin_lemma} Given a block matrix $A=[A_{ij}]$, all eigenvalues of $A$ are contained in the set $G= \sum_{i=1}^n \cup G_i$ where $G_i$ is the set of all $\lambda \in \Bbb{C}$ satisfying
\begin{align}
\Vert (A_{ii} - \lambda I_2)^{-1} \Vert^{-1} \leq \sum_{j=1, j\neq i}^n \Vert A_{ij} \Vert
\end{align}
\end{lemma}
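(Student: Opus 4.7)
The plan is to mimic the classical scalar proof of Gershgorin's theorem, working block by block and identifying the block of the eigenvector that carries the largest norm. Let $\lambda$ be an eigenvalue of $A$ with eigenvector $x \neq 0$, partitioned conformally as $x = (x_1^\tp, x_2^\tp, \ldots, x_n^\tp)^\tp$ with $x_i \in \Bbb{C}^2$. Choose an index $k$ achieving $\Vert x_k \Vert = \max_{1 \leq i \leq n} \Vert x_i \Vert$; since $x \neq 0$, we have $\Vert x_k \Vert > 0$.

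Next I would write out the $k$-th block row of the eigenvalue equation $Ax = \lambda x$. This gives $\sum_{j=1}^n A_{kj} x_j = \lambda x_k$, which rearranges to
\begin{align}
(A_{kk} - \lambda I_2)\, x_k = -\sum_{j=1,\, j \neq k}^n A_{kj} x_j.
\end{align}
If $A_{kk} - \lambda I_2$ is singular then $\lambda \in \sigma(A_{kk})$, and under the convention $\Vert (A_{kk} - \lambda I_2)^{-1} \Vert^{-1} = 0$ the inclusion $\lambda \in G_k$ holds trivially (alternatively, one recovers the same conclusion by a limiting argument with $\lambda' \to \lambda$). Otherwise, multiply both sides on the left by $(A_{kk} - \lambda I_2)^{-1}$ to get $x_k = -(A_{kk} - \lambda I_2)^{-1} \sum_{j \neq k} A_{kj} x_j$.

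Now I would apply a consistent vector norm and the induced operator norm on both sides, use sub-multiplicativity and the triangle inequality, and then exploit the maximality of $\Vert x_k \Vert$:
\begin{align}
\Vert x_k \Vert \,\leq\, \Vert (A_{kk} - \lambda I_2)^{-1} \Vert \sum_{j \neq k} \Vert A_{kj} \Vert \cdot \Vert x_j \Vert \,\leq\, \Vert (A_{kk} - \lambda I_2)^{-1} \Vert \Bigl( \sum_{j \neq k} \Vert A_{kj} \Vert \Bigr) \Vert x_k \Vert.
\end{align}
Dividing by $\Vert x_k \Vert > 0$ and rearranging yields $\Vert (A_{kk} - \lambda I_2)^{-1} \Vert^{-1} \leq \sum_{j \neq k} \Vert A_{kj} \Vert$, i.e. $\lambda \in G_k \subseteq G$.

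The main obstacle is not really the algebra but the conventions: one has to be explicit that $\Vert \cdot \Vert$ is used simultaneously as a vector norm on $\Bbb{C}^2$ and as the induced operator norm on $\Bbb{R}^{2\times 2}$ (so that sub-multiplicativity $\Vert A_{kj} x_j \Vert \leq \Vert A_{kj} \Vert\, \Vert x_j \Vert$ is legitimate), and that the singular case $\lambda \in \sigma(A_{kk})$ is handled either by the limiting convention or by noting that $\lambda$ then already lies in $G_k$ because the inequality reduces to $0 \leq \sum_{j \neq k} \Vert A_{kj} \Vert$. Once those conventions are fixed, the proof is a direct block-wise transcription of the classical argument and requires no further machinery.
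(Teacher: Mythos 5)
Your proof is correct and complete. Note that the paper itself offers no proof of this lemma --- it is imported verbatim from the block-Gershgorin theorem of \cite{Feingold_PJM_1962,Sluis_LAA_1979} --- and your argument is exactly the standard one from that source: partition the eigenvector conformally, select the block $x_k$ of maximal norm, isolate the diagonal block in the $k$-th block row of $Ax=\lambda x$, and bound via the triangle inequality and submultiplicativity of the induced norm before cancelling $\Vert x_k\Vert>0$. Your explicit handling of the degenerate case $\lambda\in\sigma(A_{kk})$, where $\Vert (A_{kk}-\lambda I_2)^{-1}\Vert^{-1}$ is read as the smallest singular value of $A_{kk}-\lambda I_2$ (namely zero, so the inequality holds trivially), is the right convention and is the only point where care is needed; nothing is missing.
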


\begin{theorem} \label{theorem_not_pd}
If agent $i$ is misaligned as $\cos\theta_i < 1$, then its eigenvalues are within the circles with center $(m_i \cos\theta_i, \pm m_i \sin \theta_i)$ and with radius $m_i$ in the complex domain, where $m_i$ is the cardinality of $\mathcal{N}_i$.
\end{theorem}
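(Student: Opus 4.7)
The plan is to apply the block Gershgorin result (Lemma~\ref{Gershgorin_lemma}) directly to the Laplacian $\mathcal{L}^{out}$, using the fact that every block entry in row $i$ carries the same rotation $D(\theta_i)$, which lets us compute both the off-diagonal norm sum and the inverse norm of the diagonal block in closed form.

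First I would identify the blocks. From the explicit form of $\mathcal{L}^{out}$, the diagonal block is $A_{ii} = m_i D(\theta_i)$ and the off-diagonal blocks are $A_{ij} = -a_{ij} D(\theta_i)$ for $j \neq i$. Since $D(\theta_i)$ is orthogonal, $\|A_{ij}\| = a_{ij}$ in the spectral norm, and therefore
\begin{align}
\sum_{j=1,\,j\neq i}^{n} \|A_{ij}\| \;=\; \sum_{j \in \mathcal{N}_i} a_{ij} \;=\; m_i.
\end{align}

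Next I would evaluate $\|(A_{ii} - \lambda I_2)^{-1}\|^{-1}$. The key observation is that $D(\theta_i)$ is a normal matrix (its eigenvalues are $e^{\pm j\theta_i}$ with orthogonal eigenvectors), so $m_i D(\theta_i) - \lambda I_2$ is also normal; its singular values coincide with the moduli of its eigenvalues, namely $|m_i e^{j\theta_i} - \lambda|$ and $|m_i e^{-j\theta_i} - \lambda|$. Consequently,
\begin{align}
\|(A_{ii} - \lambda I_2)^{-1}\|^{-1} \;=\; \sigma_{\min}(m_i D(\theta_i) - \lambda I_2) \;=\; \min\bigl\{|m_i e^{j\theta_i} - \lambda|,\; |m_i e^{-j\theta_i} - \lambda|\bigr\}.
\end{align}

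Plugging these two pieces into the inequality of Lemma~\ref{Gershgorin_lemma} yields
\begin{align}
\min\bigl\{|m_i e^{j\theta_i} - \lambda|,\; |m_i e^{-j\theta_i} - \lambda|\bigr\} \;\leq\; m_i,
\end{align}
which is equivalent to $\lambda$ lying in the union of the two closed disks of radius $m_i$ centered at the complex points $m_i e^{\pm j\theta_i} = m_i \cos\theta_i \pm j\, m_i \sin\theta_i$. Taking the union over $i=1,\dots,n$ gives the stated localization.

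The only step that is not purely mechanical is the computation of $\sigma_{\min}(m_i D(\theta_i) - \lambda I_2)$; the main obstacle is justifying that singular values coincide with eigenvalue moduli, which is where the normality (equivalently, orthogonality) of $D(\theta_i)$ enters. Everything else is a direct substitution into the block Gershgorin bound.
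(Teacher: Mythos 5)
Your proposal is correct and reaches exactly the region $\mathcal{R}^c$ of the paper. The overall skeleton is the same as the paper's --- apply the block Gershgorin bound of Lemma~\ref{Gershgorin_lemma} row by row, note that $\|A_{ij}\| = a_{ij}$ because $D(\theta_i)$ is orthogonal, and then evaluate $\|(m_i D(\theta_i) - \lambda I_2)^{-1}\|^{-1}$ --- but you execute the one nontrivial step by a genuinely different and much shorter argument. The paper computes the inverse $\bigl(D(\theta_i) - \tfrac{\lambda_i}{m_i} I_2\bigr)^{-1}$ explicitly, forms $P^\ast P$, extracts the eigenvalues of the resulting matrix $\mathbf{T}$, and identifies them with the quantities $\xi_i = (\alpha_i - \cos\theta_i)^2 + (\beta_i + \sin\theta_i)^2$ and $\xi_i^\ast$ through a factorization of the quartic denominator; this occupies most of the proof. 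You instead observe that $m_i D(\theta_i) - \lambda I_2$ is normal (which holds even for complex $\lambda$, since the commutator terms $\bar{\lambda} D + \lambda D^{\tp}$ agree on both sides), so its singular values are the moduli of its eigenvalues $m_i e^{\pm j\theta_i} - \lambda$, and the Gershgorin inequality collapses immediately to $\min\{|\lambda - m_i e^{j\theta_i}|, |\lambda - m_i e^{-j\theta_i}|\} \leq m_i$, i.e., the union of the two disks. Your route buys brevity and makes the geometric meaning of the two circle centers transparent (they are $m_i$ times the eigenvalues of $D(\theta_i)$); the paper's brute-force computation buys nothing extra here except independence from the normality fact. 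The only point worth stating explicitly in your write-up is that normality is preserved under subtraction of a complex scalar multiple of the identity, and that at the two exceptional points $\lambda = m_i e^{\pm j\theta_i}$ the diagonal block is singular, so $\sigma_{\min} = 0$ and those points are trivially inside the region --- the same degenerate case the paper handles via $\xi_i = 0$.
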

\begin{proof}
From \textit{Lemma~\ref{Gershgorin_lemma}}, we know that eigenvalues $\lambda_i$ of each row block matrix satisfy the following inequality:
\begin{align}
\left\Vert \left( \sum_{j \in \mathcal{N}_i} a_{ij} D(\theta_i)- \lambda_i I_2 \right)^{-1} \right\Vert^{-1} &\leq \sum_{j \in \mathcal{N}_i, j \neq i}^n \Vert a_{ij} D(\theta_i) \Vert \nonumber\\
\Longleftrightarrow \left\Vert \left( m_i D(\theta_i)- \lambda_i I_2 \right)^{-1} \right\Vert^{-1} &\leq m_i \Vert D(\theta_i) \Vert \label{main_condition}
\end{align}
Since singular value of a SO(2) matrix is $1$, we can change the above inequality as
\begin{align}
\left\Vert \left( D(\theta_i)- \frac{\lambda_i}{m_i} I_2 \right)^{-1} \right\Vert^{-1} &\leq 1 \nonumber\\
\Longleftrightarrow \left\Vert \left( D(\theta_i)- \frac{\lambda_i}{m_i} I_2 \right)^{-1} \right\Vert &\geq 1
\end{align}
The inverse of the left-hand side can be obtained as:
\begin{align}
\left(D(\theta_i)- \frac{\lambda_i}{m_i} I_2 \right)^{-1} =\underbrace{ \frac{1}{  \frac{1}{m_i^2}\lambda_i^2  - \frac{2 \cos\theta_i}{m_i}\lambda_i  + 1  }   \left[\begin{array}{cc}
\cos\theta_i -\frac{\lambda_i}{m_i}   &  \sin\theta_i  \\
-\sin\theta_i    & \cos\theta_i -\frac{\lambda_i}{m_i} 
             \end{array}\right] }_{\triangleq P} \label{inverse_diagonal}
\end{align}
Denote $\frac{\lambda_i}{m_i} \triangleq \kappa_i = \alpha_i + j \beta_i$ and $\frac{\lambda_i^\ast}{m_i} \triangleq \kappa_i^\ast = \alpha_i - j \beta_i$, where $\lambda_i^\ast$ is the conjugate of $\lambda_i$. Then, in order to use the relationship $\Vert P \Vert = \sqrt{ \lambda_{max}(P^\ast P) }$, where $P^\ast$ is a complex conjugate of $P$, $P^\ast P$ can be obtained as:
\begin{align}
P^\ast P &= \frac{1}{( {\kappa^\ast}_i^2 - 2 \cos\theta_i {\kappa}_i^\ast +1 )  ( {\kappa}_i^2 - 2 \cos\theta_i {\kappa}_i +1 )   } \nonumber\\
& \times  \underbrace{\left[\begin{array}{cc}
\cos\theta_i -\kappa_i^\ast   &  -\sin\theta_i  \\
\sin\theta_i    & \cos\theta_i -\kappa_i^\ast
             \end{array}\right] \left[\begin{array}{cc}
\cos\theta_i -\kappa_i   &  \sin\theta_i  \\
-\sin\theta_i    & \cos\theta_i -\kappa_i
             \end{array}\right] }_{ \triangleq  \mathbf{T}}
\end{align}
It is noticeable that ${\kappa_i^\ast}^2 - 2 \cos\theta_i \kappa_i^\ast +1$ and ${\kappa}_i^2 - 2 \cos\theta_i {\kappa}_i +1$ can be decomposed as 
\begin{align}
{\kappa_i^\ast}^2 - 2 \cos\theta_i \kappa_i^\ast +1 &= ( \kappa_i^\ast -\cos\theta_i - j \sin\theta_i )  ( {\kappa}_i^\ast -\cos\theta_i + j \sin\theta_i ) \nonumber\\
{\kappa}_i^2 - 2 \cos\theta_i {\kappa}_i +1 &=  ( {\kappa}_i -\cos\theta_i + j \sin\theta_i )  ( {\kappa}_i -\cos\theta_i - j \sin\theta_i ) \nonumber
\end{align}
Then, $({\kappa_i^\ast}^2 - 2 \cos\theta_i \kappa_i^\ast +1 )  ( {\kappa}_i^2 - 2 \cos\theta_i {\kappa}_i +1)$ can be rewritten as
\begin{align}
& ({\kappa_i^\ast}^2 - 2 \cos\theta_i {\kappa_i^\ast} +1 )  ( {\kappa}_i^2 - 2 \cos\theta_i {\kappa}_i +1) \nonumber\\
&~~~~~ = ( {\kappa_i^\ast} -\cos\theta_i - j \sin\theta_i ) ( {\kappa_i^\ast} -\cos\theta_i + j \sin\theta_i ) \nonumber\\
&~~~~~~~~~~ \times  ( {\kappa}_i -\cos\theta_i + j \sin\theta_i )  ( {\kappa}_i -\cos\theta_i - j \sin\theta_i ) \nonumber\\
&~~~~~= ( {\kappa_i^\ast} -\cos\theta_i - j \sin\theta_i )  ( {\kappa}_i -\cos\theta_i + j \sin\theta_i ) \nonumber\\
&~~~~~~~~~~ \times    ( {\kappa_i^\ast} -\cos\theta_i + j \sin\theta_i )  ( {\kappa}_i -\cos\theta_i - j \sin\theta_i ) \nonumber\\
&~~~~~= \underbrace{(  {\kappa_i^\ast} \kappa_i - {\kappa_i^\ast} \cos\theta_i + j {\kappa_i^\ast} \sin\theta_i - \kappa_i \cos\theta_i - j \kappa_i \sin\theta_i +1)}_{\triangleq \xi_i^\ast } \nonumber \\
&~~~~~~~~~~ \times \underbrace{( {\kappa_i^\ast} \kappa_i - {\kappa_i^\ast} \cos\theta_i - j {\kappa_i^\ast} \sin\theta_i - \kappa_i \cos\theta_i + j \kappa_i \sin\theta_i +1)}_{\triangleq \xi_i}
\end{align}
Meanwhile the eigenvalues of $\mathbf{T}$ are obtained as $1 - \kappa_i \cos\theta_i - {\kappa_i^\ast} \cos\theta_i + {\kappa_i^\ast}\kappa_i \pm j(\kappa_i \sin\theta_i - {\kappa_i^\ast} \sin\theta_i )$, which are conjugate pair and equivalent to $\xi_i$ or $\xi_i^\ast$. Furthermore, we can see that $1 - \kappa_i \cos\theta_i - {\kappa_i^\ast}\cos\theta_i + {\kappa_i^\ast}\kappa_i \pm j(\kappa_i \sin\theta_i - {\kappa_i^\ast} \sin\theta_i ) = 1  - 2 \alpha_i \cos\theta_i + \alpha_i^2 + \beta_i^2 \pm 2 \beta_i \sin \theta_i = (\alpha_i  - \cos\theta_i)^2  + (\beta_i  \pm \sin\theta_i)^2 \geq 0$. Thus, when $\lambda_i = m_i \cos\theta_i \mp  j m_i \sin\theta_i$, $\xi_i^\ast = 0$ and  $\xi_i = 0$. Hence, except these cases, we have $\xi_i^\ast >0$ and $\xi_i >0$. Consequently, $\lambda_{max}(P^\ast P)$ can be either $\frac{1}{\xi_i}$ or $\frac{1}{\xi_i^\ast}$ because the eigenvalues of $T$ are the conjugate pair. To satisfy the condition $\Vert P \Vert \geq 1$, it is now required to have $\xi_i \leq 1$ or $\xi_i^\ast \leq 1$.

Since $\xi_i = (\alpha_i  - \cos\theta_i)^2  + (\beta_i  + \sin\theta_i)^2$ and $\xi_i^\ast = (\alpha_i  - \cos\theta_i)^2  + (\beta_i  - \sin\theta_i)^2$, the eigenvalues $\lambda_i$ of (\ref{main_condition}) need to satisfy 
$(\alpha_i  - \cos\theta_i)^2  + (\beta_i  + \sin\theta_i)^2 \leq 1$ or $(\alpha_i  - \cos\theta_i)^2  + (\beta_i  - \sin\theta_i)^2 \leq 1$. From $\frac{\lambda_i}{m_i} = \kappa_i = \alpha_i + j \beta_i$, replacing $\lambda_i = \text{Re}\lambda_i + j \text{Im}\lambda_i$, where $\text{Re}\lambda_i$ is the real part of $\lambda_i$ and $\text{Im}\lambda_i$ is the imaginary part of $\lambda_i$, we can have the following region for $\lambda_i$:
\begin{align}
\lambda_i \in \mathcal{R}^c &\triangleq \left\{\lambda :  (\text{Re}\lambda/m_i  - \cos\theta_i)^2  + (\text{Im}\lambda/m_i  + \sin\theta_i)^2 \leq 1               \right\} \nonumber\\
&~~~~~\bigcup \left\{\lambda :  (\text{Re}\lambda/m_i  - \cos\theta_i)^2  + (\text{Im}\lambda/m_i  - \sin\theta_i)^2 \leq 1    \right   \}\nonumber\\
& = \left\{\lambda :  (\text{Re}\lambda  - m_i \cos\theta_i)^2  + (\text{Im}\lambda  + m_i\sin\theta_i)^2 \leq m_i^2               \right\} \nonumber\\
&~~~~~ \bigcup \left\{\lambda :  (\text{Re}\lambda  - m_i \cos\theta_i)^2  + (\text{Im}\lambda  - m_i \sin\theta_i)^2 \leq m_i^2    \right   \} \label{eq_theorem1_cond}
\end{align}
which completes the proof.
\end{proof}

\begin{corollary} \label{corollary_not_negative2}
If $\cos\theta_i =1$, then all the eigenvalues of $\mathcal{L}^{out}$ are not negative, while if $\cos\theta_i = -1$, all the eigenvalues of $\mathcal{L}^{out}$ are not positive. 
\end{corollary}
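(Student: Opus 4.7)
The plan is to apply Theorem~\ref{theorem_not_pd} together with Lemma~\ref{Gershgorin_lemma}, which places every eigenvalue of $\mathcal{L}^{out}$ in the union, taken over all agents $i$, of the two disks with centers $(m_i\cos\theta_i,\pm m_i\sin\theta_i)$ and radius $m_i$. The corollary is then obtained by substituting the two extreme values $\cos\theta_i=\pm 1$ and checking where the resulting disks sit relative to the imaginary axis.

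First I would handle the case $\cos\theta_i=1$ (understood as holding for every $i$). Then $\sin\theta_i=0$, so for each $i$ the two Gershgorin disks collapse into the single disk centered at $(m_i,0)$ with radius $m_i$. For any $\lambda=a+jb$ inside this disk, $(a-m_i)^2+b^2\leq m_i^2$ expands to $a^2+b^2\leq 2 m_i a$, which forces $a\geq 0$. Taking the union over $i$ confines the spectrum of $\mathcal{L}^{out}$ to the closed right half-plane; equivalently, no eigenvalue has negative real part.

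The second case, $\cos\theta_i=-1$ for every $i$, is symmetric: $\sin\theta_i=0$ again, and the disks collapse into the one centered at $(-m_i,0)$ with radius $m_i$. The inequality $(a+m_i)^2+b^2\leq m_i^2$ now gives $a\leq 0$, so the spectrum lies in the closed left half-plane and no eigenvalue has positive real part.

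I expect no real difficulty here; the corollary is essentially a boundary-value specialization of Theorem~\ref{theorem_not_pd}. The only point I would be careful about is the interpretation of \emph{not negative} and \emph{not positive} in the statement: since $\mathcal{L}^{out}$ is non-Hermitian its eigenvalues are in general complex, and the natural reading, which is exactly what the disk containment delivers, is \emph{having non-negative real part} and \emph{having non-positive real part}, respectively. As a sanity check, both specializations agree with the direct observation that $\cos\theta_i=1$ gives $D(\theta_i)=I_2$ and hence $\mathcal{L}^{out}=\mathcal{L}^{o}$, which is positive semidefinite, while $\cos\theta_i=-1$ gives $D(\theta_i)=-I_2$ and hence $\mathcal{L}^{out}=-\mathcal{L}^{o}$.
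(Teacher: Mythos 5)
Your proof is correct and follows essentially the same route as the paper: specialize the Gershgorin disks of Theorem~\ref{theorem_not_pd} to $\cos\theta_i=\pm 1$ and observe that the resulting disks are tangent to the imaginary axis from the right or from the left. In fact your computation of the center for $\cos\theta_i=-1$ as $(-m_i,0)$ is the correct one (the paper's printed proof states $(0,\pm m_i)$, which is a typo), and your explicit expansion of the disk inequality and your reading of ``not negative/not positive'' as a statement about real parts make the argument slightly more complete than the paper's.
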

\begin{proof}
When $\cos\theta_i =1$, the center of the circle defined by (\ref{eq_theorem1_cond}) is $(m_i, 0)$ with radius $m_i$. While, with $\cos\theta_i = -1$ the center of the circle is $(0, \pm m_i)$ with radius $m_i$. 
\end{proof}

It is noticeable that when $\cos\theta_i =1$ (i.e., $\theta_i =0$), the condition (\ref{main_condition}) can be changed as
\begin{align}
 \left\Vert \left( m_i I_2- \lambda_i I_2 \right)^{-1} \right\Vert^{-1} &\leq m_i \Vert I_2 \Vert \label{main_condition_0degree}
\end{align}
As clear by the above inequality and also by Corollary~\ref{corollary_not_negative2}, $\lambda_i$ should be ranged as $0 \leq \lambda_i \leq 2 m_i$ with real part only. Using the above result, we may find more precise region for eigenvalues when $0\leq \cos\theta_i \leq 1$, which is summarized in the following theorem. For the theorem, we use some properties of matrix norm such as
\begin{align}
\Vert D(\theta_i) A \Vert &= \sqrt{\lambda_{max}( A^\ast   D(\theta_i)^\ast D(\theta_i)  A )} = \sqrt{\lambda_{max}( A^\ast A )}  \nonumber\\
&~~~~~ = \Vert A \Vert = \Vert D(\theta_i) \Vert \Vert A \Vert~\text{and}~\Vert D(\theta_i) \Vert^{-1} = \Vert D(\theta_i)^{-1} \Vert
\end{align}
where $D(\theta_i) \in SO(2)$.

\begin{corollary} \label{corollary_equivalent_if}
The condition (\ref{main_condition_0degree}), which is for $\theta_i =0$, is satisfied if $\left\Vert \left( m_i D(\theta_i)- D(\theta_i)  \lambda_i I_2 \right)^{-1} \right\Vert^{-1} \leq m_i \Vert D(\theta_i) \Vert$ with $0 \leq \lambda_i \leq 2 m_i$.
\end{corollary}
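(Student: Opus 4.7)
The plan is to reduce the hypothesized inequality to condition (\ref{main_condition_0degree}) by factoring out $D(\theta_i)$ and exploiting the norm identities listed immediately above the corollary.

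First, I would rewrite the argument of the inverse via the factorization $m_i D(\theta_i) - D(\theta_i)\lambda_i I_2 = D(\theta_i)(m_i I_2 - \lambda_i I_2)$, so that
\begin{align*}
\bigl(m_i D(\theta_i) - D(\theta_i)\lambda_i I_2\bigr)^{-1} = (m_i I_2 - \lambda_i I_2)^{-1} D(\theta_i)^{-1}.
\end{align*}
Second, I would collapse both sides of the assumed inequality to their $\theta_i=0$ counterparts. Since $D(\theta_i)\in SO(2)$ has unit spectral norm, and $D(\theta_i)^{-1}=D(-\theta_i)$ is also in $SO(2)$, right-multiplication by $D(\theta_i)^{-1}$ preserves the spectral norm; this follows from the identities preceding the corollary together with submultiplicativity applied twice, namely $\Vert A\Vert=\Vert A D(\theta_i)^{-1}D(\theta_i)\Vert\leq \Vert A D(\theta_i)^{-1}\Vert$ and $\Vert A D(\theta_i)^{-1}\Vert\leq \Vert A\Vert$. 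Consequently
\begin{align*}
\bigl\Vert (m_i I_2 - \lambda_i I_2)^{-1} D(\theta_i)^{-1}\bigr\Vert = \bigl\Vert (m_i I_2 - \lambda_i I_2)^{-1}\bigr\Vert, \qquad m_i\Vert D(\theta_i)\Vert = m_i = m_i\Vert I_2\Vert,
\end{align*}
so the hypothesis becomes literally (\ref{main_condition_0degree}).

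Third, I would close the loop with the eigenvalue range. Since $(m_i - \lambda_i)I_2$ is a scalar multiple of the identity, $\Vert ((m_i-\lambda_i)I_2)^{-1}\Vert^{-1}=|m_i-\lambda_i|$, and the reduced inequality reads $|m_i-\lambda_i|\leq m_i$, i.e. $0\leq\lambda_i\leq 2m_i$; this is exactly the hypothesized range, so (\ref{main_condition_0degree}) holds, as claimed.

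The argument is essentially a one-line norm manipulation. The only mild obstacle is justifying the right-multiplication invariance of $\Vert\cdot\Vert$ by $D(\theta_i)^{-1}$, which is not among the identities stated verbatim in the paper but follows directly from the displayed left-multiplication identity together with submultiplicativity of the spectral norm.
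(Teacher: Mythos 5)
Your proposal is correct and follows essentially the same route as the paper's own proof: factor $m_i D(\theta_i)-D(\theta_i)\lambda_i I_2 = D(\theta_i)(m_i-\lambda_i)I_2$, strip the unit-norm rotation using the norm identities, and land on (\ref{main_condition_0degree}). The only cosmetic differences are that you place $D(\theta_i)^{-1}$ on the right of the scalar matrix (immaterial, since $(m_i-\lambda_i)I_2$ commutes with everything) and justify the right-multiplication norm invariance via submultiplicativity rather than the paper's stated left-multiplication identity, plus an extra closing check that the reduced inequality is exactly $0\leq\lambda_i\leq 2m_i$.
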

\begin{proof}
From the following relationship, the \textit{if} condition is direct.
\begin{align}
&\left\Vert \left( m_i D(\theta_i)- D(\theta_i)  \lambda_i I_2 \right)^{-1} \right\Vert^{-1} \leq m_i \Vert D(\theta_i) \Vert \nonumber\\
&\Longrightarrow \left\Vert  D(\theta_i)^{-1}   \left( m_i I_2 - \lambda_i I_2 \right)^{-1}    \right\Vert^{-1} \leq m_i \Vert D(\theta_i) \Vert \nonumber\\
&\Longrightarrow  \Vert  D(\theta_i)^{-1} \Vert^{-1}     \left\Vert \left( m_i I_2 - \lambda_i I_2 \right)^{-1}    \right\Vert^{-1} \leq m_i \Vert D(\theta_i) \Vert \nonumber\\
&\Longrightarrow  \Vert  D(\theta_i) \Vert     \left\Vert \left( m_i I_2 - \lambda_i I_2 \right)^{-1}    \right\Vert^{-1} \leq m_i \Vert D(\theta_i) \Vert \nonumber\\
&\Longrightarrow    \left\Vert \left( m_i I_2 - \lambda_i I_2 \right)^{-1}    \right\Vert^{-1} \leq m_i \Vert I_2 \Vert \nonumber
\end{align}
\end{proof}
However, unfortunately, the \textit{only if} condition is not satisfied due to the following relationship, with $\psi_i \neq \theta_i$: 
\begin{align}
&\left\Vert \left( m_i D(\psi_i)- D(\psi_i)  \lambda_i I_2 \right)^{-1} \right\Vert^{-1} \leq m_i \Vert D(\theta_i) \Vert \nonumber\\
&\Longrightarrow \left\Vert  D(\psi_i)^{-1}   \left( m_i I_2 -  \lambda_i I_2 \right)^{-1}    \right\Vert^{-1} \leq m_i \Vert D(\theta_i) \Vert \nonumber\\
&\Longrightarrow    \left\Vert \left( m_i I_2 -  D(\psi_i)  \lambda_i I_2 \right)^{-1}    \right\Vert^{-1} \leq m_i \Vert I_2 \Vert \nonumber
\end{align}

From \textit{Theorem~\ref{theorem_consensus_main}}, it is now confirmed that the eigenvalues of $\mathcal{L}^{out}$ are on the right-half plane except two zero eigenvalues when $0 < \cos\theta_i \leq 1,~\forall i \in \mathcal{V}$ . Thus, by combining \textit{Theorem~\ref{theorem_consensus_main}} and \textit{Theorem~\ref{theorem_not_pd}}, we can make the following theorem:
\begin{theorem} \label{theorem_three_suff}
When $0 < \cos\theta_i \leq 1,~\forall i \in \mathcal{V}$, the eigenvalues of $\mathcal{L}^{out}$, i.e., $\lambda_i$, are placed as
\begin{align}
\lambda_i \in \mathcal{R}^c \bigcap \{\lambda: \text{Re}(\lambda)>0 \} \label{eq_theorem1_cond2}
\end{align}
while, when $-1 \leq \cos\theta_i < 0,~\forall i \in \mathcal{V}$, the eigenvalues are placed as 
\begin{align}
\lambda_i \in \mathcal{R}^c \bigcap \{\lambda: \text{Re}(\lambda)<0 \} \label{eq_theorem1_cond3}
\end{align}
Furthermore, when $\cos\theta_i =0,~\forall i \in \mathcal{V}$, all the eigenvalues are located on the imaginary axis. 
\end{theorem}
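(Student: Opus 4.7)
The plan is to treat the three regimes separately; in each case I would combine the Gershgorin-disk localization from \textit{Theorem~\ref{theorem_not_pd}} with a sign-of-the-real-part argument built from the energy functional used in \textit{Theorem~\ref{theorem_consensus_main}}. The first regime $0<\cos\theta_i\leq 1$ requires essentially no new work: \textit{Theorem~\ref{theorem_consensus_main}} already asserts that every nonzero eigenvalue of $\mathcal{L}^{out}$ has strictly positive real part, and \textit{Theorem~\ref{theorem_not_pd}} confines every eigenvalue to $\mathcal{R}^c$, so intersecting the two exclusion sets yields (\ref{eq_theorem1_cond2}).

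For the second regime $-1\leq\cos\theta_i<0$ I would reduce to the first via the reflection $\tilde{\theta}_i\triangleq\theta_i+\pi$. Since $D(\tilde{\theta}_i)=-D(\theta_i)$, the block-diagonal factor satisfies $D_{D(\tilde{\theta}_i)}=-D_{D(\theta_i)}$ and the associated Laplacian satisfies $\tilde{\mathcal{L}}^{out}=-\mathcal{L}^{out}$. Because $\cos\tilde{\theta}_i=-\cos\theta_i>0$, the first regime applied to $\tilde{\mathcal{L}}^{out}$ places its nonzero eigenvalues in the open right half-plane, forcing those of $\mathcal{L}^{out}$ into the open left half-plane. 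A short check shows that $\mathcal{R}^c$ is preserved under this sign flip: the Gershgorin circles attached to $\tilde{\theta}_i$ are centered at $(-m_i\cos\theta_i,\mp m_i\sin\theta_i)$, and negating them returns the original centers $(m_i\cos\theta_i,\pm m_i\sin\theta_i)$ of radius $m_i$. Intersecting delivers (\ref{eq_theorem1_cond3}).

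The third regime $\cos\theta_i=0$ is where the only genuinely new argument appears. Here $D(\theta_i)^{T}=-D(\theta_i)$, so $D_{D(\theta_i)}$ is skew-symmetric and the symmetric form $D_{D(\theta_i)}^{T}+D_{D(\theta_i)}$ that drove the Lyapunov argument of \textit{Theorem~\ref{theorem_consensus_main}} vanishes identically. Consequently $\dot V\equiv 0$ along every trajectory, and $V(p)=p^{T}\mathcal{L}^{o}p$ becomes a conserved quantity of the linear flow. For any nonzero eigenvalue $\lambda=\mu+i\omega$ with eigenvector $v=v_R+iv_I$, I would evaluate $V$ on the real mode $p(t)=e^{\mu t}\bigl(\cos(\omega t)v_R-\sin(\omega t)v_I\bigr)$; this gives $V(p(t))=e^{2\mu t}B(t)$ with $B$ a nonnegative trigonometric polynomial in $t$, so constancy of $V$ forces either $B\equiv 0$ or $\mu=0$. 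The alternative $B\equiv 0$ implies $v_R,v_I\in\ker\mathcal{L}^{o}$, whence $\mathcal{L}^{out}v=D_{D(\theta_i)}\mathcal{L}^{o}v=0$ and $\lambda=0$, contradicting our choice. Hence $\mu=0$, so every nonzero eigenvalue is purely imaginary; the two zero eigenvalues are the ones already identified in \textit{Lemma~\ref{lemma_zeroeig}}.

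The step I expect to be the genuine obstacle is the last one. The identity $\dot V\equiv 0$ is clean, but using it to constrain eigenvalues requires carefully separating the periodic and exponential parts of each mode and ruling out eigendirections that happen to lie in the isotropic cone of $V$. The key observation, as above, is that this isotropic cone coincides with $\ker\mathcal{L}^{o}=\ker\mathcal{L}^{out}$, so any candidate ``bad'' direction is already a kernel direction and therefore necessarily carries eigenvalue $0$ rather than some nonzero $\mu$.
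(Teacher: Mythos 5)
Your proof is correct, and for two of the three regimes it follows essentially the paper's route: the case $0<\cos\theta_i\leq 1$ is, as you say, just the intersection of \textit{Theorem~\ref{theorem_consensus_main}} with \textit{Theorem~\ref{theorem_not_pd}}, and your reduction of the case $-1\leq\cos\theta_i<0$ via $\tilde{\theta}_i=\theta_i+\pi$ and $\tilde{\mathcal{L}}^{out}=-\mathcal{L}^{out}$ is the same sign-flip idea the paper carries out by expanding $D(\pi/2+\theta')$ entrywise --- your phrasing is cleaner and makes the eigenvalue negation explicit rather than appealing to ``the analysis of Theorem~\ref{theorem_consensus_main}.'' The genuine divergence is the regime $\cos\theta_i=0$. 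The paper takes all agents to share the \emph{same} rotation $D(\pi/2)$, writes $\mathcal{L}^{out}$ as a Kronecker product (more precisely $\mathcal{L}\otimes D(\pi/2)$), and reads off that every eigenvalue of $\mathcal{L}^{o}$ is rotated by $\pm\pi/2$; this is a two-line argument, but it silently excludes a mix of $\theta_i=+\pi/2$ and $\theta_j=-\pi/2$, both of which satisfy $\cos\theta_i=0$. Your conserved-quantity argument --- $D_{D(\theta_i)}$ skew-symmetric, hence $V=p^{T}\mathcal{L}^{o}p$ constant along the flow, hence any mode with $\mathrm{Re}\,\lambda\neq 0$ forces its eigenvector into the isotropic cone of $V$, which is exactly $\ker\mathcal{L}^{o}=\ker\mathcal{L}^{out}$ --- handles the mixed-sign case uniformly and is therefore strictly more general than what the paper actually proves, at the cost of being longer. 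The only points worth tightening are the degenerate subcase $\omega=0$ with $v_R=0$ (normalize the eigenvector by a phase so that $v_R\neq 0$) and the fact that extracting $v_I\in\ker\mathcal{L}^{o}$ from $B\equiv 0$ uses $\omega\neq 0$ (e.g., differentiate $w(t)$ at $t=0$); neither affects the conclusion.
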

\begin{proof} The case of (\ref{eq_theorem1_cond2}) is direct by \textit{Theorem~\ref{theorem_consensus_main}} and \textit{Theorem~\ref{theorem_not_pd}}.
For the case of (\ref{eq_theorem1_cond3}), let $\frac{\pi}{2} < \theta_i \leq \pi$. Then, $\theta_i = \frac{\pi}{2} + \theta'$, where $0< \theta' \leq \frac{\pi}{2}$. Thus, we can have
\begin{align}
D(\theta_i) = D({\pi}/{2} + \theta')  &= \left[\begin{array} {cc}
 \cos({\pi}/{2} + \theta') & -\sin({\pi}/{2} + \theta') \\
\sin({\pi}/{2} + \theta')& \cos({\pi}/{2} + \theta') \end{array} \right] \nonumber\\
&=  \left[\begin{array} {cc}
- \sin(\theta') & -\cos(\theta') \\
\cos(\theta')& -\sin(\theta') \end{array} \right] = -\left[\begin{array} {cc}
\sin(\theta') & \cos(\theta') \\
-\cos(\theta')& \sin(\theta') \end{array} \right]
\end{align}
Likewise, when $-\pi \leq \theta_i < -\frac{\pi}{2}$, we have $\theta_i = -\frac{\pi}{2} + \theta'$, where $-\frac{\pi}{2} \leq \theta' < 0$. Thus, we have $D(\theta_i) = D(-{\pi}/{2} + \theta') =  \left[\begin{array} {cc}
 \sin(\theta') & \cos(\theta') \\
-\cos(\theta')& \sin(\theta') \end{array} \right]$. Thus, based on the analysis of \textit{Theorem~\ref{theorem_consensus_main}}, we can see that the signs of real parts of eigenvalues when $-1 \leq \cos\theta_i < 0,~\forall i \in \mathcal{V}$ are reversed from the signs of real parts of eigenvalues when $0 < \cos\theta_i \leq 1,~\forall i \in \mathcal{V}$. Lastly,  when $\cos\theta_i =0,~\forall i \in \mathcal{V}$, due to $D(\theta_i = \pi/2)= D(\theta_j= \pi/2), ~\forall i, j \in \mathcal{V}$, we can have $\mathcal{L}^{out} = D(\pi/2) \otimes \mathcal{L}^{o}$, where $\otimes$ is the Kronecker product. Using the eigenvalue property of Kronecker product, we can see that all of the eigenvalues of $\mathcal{L}^o$ are rotated by $\pm \pi/2$. Thus, when $\cos\theta_i =0,~\forall i \in \mathcal{V}$, eigenvalues are placed on the imaginary axis. 
\end{proof}

\begin{remark} \label{remark_1} Let us divide the nodes as $\mathcal{V} = \mathcal{V}_{+} \bigcup \mathcal{V}_{-}$, in which it holds $0 < \cos\theta_i \leq 1,~\forall i \in \mathcal{V}_{+}$ and $-1 \leq \cos\theta_i \leq  0$ $\forall i \in \mathcal{V}_{-}$. Let us suppose that $\mathcal{V}_{+}$ is a non-empty set and  $\mathcal{V}_{-}$ is also a non-empty set. In this case, it is hard to estimate the locations of eigenvalues analytically. From a number of numerical tests, in certain conditions, all the eigenvalues still could be located in the RHP, while in most cases, there were eigenvalues located in the LHP. 
\end{remark}

As a special case of \textit{Theorem~\ref{theorem_three_suff}}, when the misaligned orientation angles are equivalent as $\theta_i = \theta_j = \theta, ~i \neq j$ and $0 < \cos \theta_i = \cos \theta$, we can have the following result, which was also studied in Corollary 3.3 of \cite{weiren_cdc_2008}, 
\begin{corollary}
When $\theta_i = \theta_j = \theta, ~i \neq j$ and $0 < \cos \theta_i,~ \forall i$, the consensus is achieved, with eigenvalues of $\mathcal{L}^o$ rotated by angles $\theta$ and $-\theta$ respectively. 
\end{corollary}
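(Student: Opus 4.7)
My plan is to exploit the fact that when every agent shares the same misalignment angle $\theta$, the block decomposition $\mathcal{L}^{out} = D_{D(\theta_i)}\mathcal{L}^{o}$ collapses into a Kronecker product, after which the spectrum and convergence are immediate.

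First, I would observe that under $\theta_i = \theta$ for all $i$, the block diagonal matrix in the decomposition becomes $D_{D(\theta_i)} = I_n \otimes D(\theta)$, while the topology Laplacian factors as $\mathcal{L}^{o} = L \otimes I_2$, where $L \in \mathbb{R}^{n\times n}$ is the standard scalar Laplacian of $\mathcal{G}$ (this matches the stacking convention $p=(p_1^\tp,\ldots,p_n^\tp)^\tp$ used in (\ref{eq_laplacian})). Applying the mixed-product identity $(A\otimes B)(C\otimes D) = (AC)\otimes(BD)$, I obtain
\begin{align}
\mathcal{L}^{out} = (I_n \otimes D(\theta))(L \otimes I_2) = L \otimes D(\theta).
\end{align}

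Next, I would invoke the Kronecker eigenvalue property. Let $0=\mu_1 \leq \mu_2 \leq \cdots \leq \mu_n$ be the eigenvalues of $L$. Since $D(\theta) \in SO(2)$ has eigenvalues $e^{\pm j\theta} = \cos\theta \pm j\sin\theta$, the spectrum of $\mathcal{L}^{out} = L\otimes D(\theta)$ consists precisely of the $2n$ complex numbers $\mu_k e^{j\theta}$ and $\mu_k e^{-j\theta}$ for $k=1,\ldots,n$. Because the eigenvalues of $\mathcal{L}^{o} = L \otimes I_2$ are each $\mu_k$ with multiplicity two, this exactly says that the spectrum of $\mathcal{L}^{o}$ is rotated in the complex plane by $+\theta$ and $-\theta$, yielding the second part of the claim.

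Finally, consensus follows directly: since $\cos\theta > 0$, for every $k \geq 2$ both $\mu_k e^{\pm j\theta}$ have strictly positive real part $\mu_k \cos\theta > 0$, while $\mu_1 = 0$ produces the two zero eigenvalues associated with the consensus subspace identified in \textit{Lemma~\ref{lemma_null}}. Global asymptotic convergence to a consensus then follows by \textit{Theorem~\ref{theorem_consensus_main}} (or equivalently \textit{Theorem~\ref{theorem_three_suff}}). There is no real obstacle here; the only point requiring minor care is verifying that the indexing convention of the paper is consistent with the factorization $\mathcal{L}^{o} = L \otimes I_2$ rather than $I_2 \otimes L$, which is settled by inspecting the block structure displayed in the definition of $\mathcal{L}^{out}$.
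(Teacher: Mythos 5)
Your proof is correct and matches the paper's intent: the paper states this corollary without a written proof (as a special case of Theorem~\ref{theorem_three_suff}), but the Kronecker-product argument you give is exactly the one the paper itself deploys for the analogous $\cos\theta_i=0$ case, namely writing $\mathcal{L}^{out}$ as a Kronecker product of $D(\theta)$ with the scalar Laplacian and reading off the rotated spectrum. Your added care about the factor ordering ($L\otimes D(\theta)$ versus $D(\theta)\otimes\mathcal{L}^o$) is a worthwhile refinement of the paper's looser notation, though it does not affect the spectrum.
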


\begin{remark} \label{remark_2} When $\theta_i \neq 0$, the Laplacian matrix $\mathcal{L}^{out}$ has complex eigenvalues; so the trajectories of agents may exhibit behaviors of stable or unstable focus, which requires more complicated convergence behaviors. Also the average  consensus is no more ensured due to $\sum_{i=1}^n \dot{p}_i =  \sum_{i=1}^n \left[ \sum_{j \in \mathcal{N}_i} a_{ij} D(\theta_i)(p_i - p_j) \right]\neq 0$ before converging to a common value. 
\end{remark}

Related with \textit{Remark~\ref{remark_2}}, we can see that the consensus point is a function of rotated initial positions of agents and magnitudes of rotation angles $\theta_i$. Let the topology Laplacian matrix be changed as $\mathcal{L}^{o} = \mathcal{L} \otimes I_2$, where $\mathcal{L}$ is the normal Laplacian matrix for a connected graph. Then, (\ref{eq_lzeta_maineq}) is changed as
\begin{align}
\dot{p} = - D_{D(\theta_i)} ( \mathcal{L} \otimes I_2 ) p(t) \label{eq_pdll}
\end{align}
The Laplacian matrix $\mathcal{L}$ satisfies $v^T \mathcal{L} =0$, where $v=(1, 1, \ldots, 1)^T \in \Bbb{R}^n$ is the vector with all elements being $1$. Now, we can have the following theorem.
\begin{theorem} \label{theorem_final_consensus}
Let $0< \cos \theta_i$ for all $i \in \mathcal{V}$, and let the initial positions be denoted as ${p}_i(0)$ and the final consensus point as ${p}(t_f)$. Then, ${p}(t_f)$ is computed as $p^{final} = Y(\theta_1, \ldots, \theta_n) \sum_{i=1}^n D(\theta_i)^T {p}_i(0)$ where  $Y(\theta_1, \ldots, \theta_n)$ is a $2 \times 2$ matrix that is a function of $\theta_i$. 
\end{theorem}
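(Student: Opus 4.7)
The plan is to identify a linear conservation law for the flow $\dot{p}=-\mathcal{L}^{out}p$ coming from the two-dimensional left nullspace of $\mathcal{L}^{out}=D_{D(\theta_i)}(\mathcal{L}\otimes I_2)$, evaluate it at $t=0$ and at the limit, and then solve algebraically for $p^{final}$. Convergence of the flow to a common value is already in hand from \textit{Theorem~\ref{theorem_consensus_main}}, so the task reduces to pinning down this common value.

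First I would describe the left nullspace. A covector $w\in\mathbb{R}^{2n}$ satisfies $w^T\mathcal{L}^{out}=0$ if and only if $v^T:=w^T D_{D(\theta_i)}$ lies in the left nullspace of $\mathcal{L}\otimes I_2$, which (since $\mathcal{L}$ is the symmetric Laplacian of a connected undirected graph) equals $\mathbf{1}_n\otimes\mathbb{R}^2$. Partitioning $w=(w_1^T,\ldots,w_n^T)^T$ blockwise yields $w_i^T D(\theta_i)=v_0^T$ for a common $v_0\in\mathbb{R}^2$, and using $D(\theta_i)^{-1}=D(\theta_i)^T$ gives $w_i=D(\theta_i)v_0$. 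Letting $v_0$ range over $\{e_1,e_2\}$ produces a basis of the left nullspace, and assembling these basis vectors as columns yields
\begin{equation*}
W=\bigl[D(\theta_1)^T,\,D(\theta_2)^T,\,\ldots,\,D(\theta_n)^T\bigr]^T\in\mathbb{R}^{2n\times 2},\qquad W^T\mathcal{L}^{out}=0.
\end{equation*}

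Next I would exploit conservation: $\frac{d}{dt}(W^T p)=-W^T\mathcal{L}^{out}p=0$, and hence $W^T p(t)\equiv W^T p(0)=\sum_{i=1}^n D(\theta_i)^T p_i(0)$ for every $t\ge 0$. By \textit{Theorem~\ref{theorem_consensus_main}}, $p_i(t)\to p^{final}$ for each $i$ as $t\to\infty$, so passing to the limit gives $\bigl(\sum_{i=1}^n D(\theta_i)^T\bigr)p^{final}=\sum_{i=1}^n D(\theta_i)^T p_i(0)$. Setting $Y(\theta_1,\ldots,\theta_n):=\bigl(\sum_{i=1}^n D(\theta_i)^T\bigr)^{-1}$ then yields the claimed expression, and $Y$ clearly depends only on the misalignment angles.

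The one non-routine ingredient is the invertibility of $\sum_{i=1}^n D(\theta_i)^T$. A direct block computation gives
\begin{equation*}
\det\!\left(\sum_{i=1}^n D(\theta_i)^T\right)=\left(\sum_{i=1}^n\cos\theta_i\right)^{\!2}+\left(\sum_{i=1}^n\sin\theta_i\right)^{\!2},
\end{equation*}
and the hypothesis $0<\cos\theta_i$ for every $i$ forces $\sum_i\cos\theta_i>0$, so the determinant is strictly positive. This is precisely where the sign assumption on the $\theta_i$ is used, and will be the main obstacle to any attempt to weaken the hypothesis (for mixed-sign $\cos\theta_i$ the determinant can vanish). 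Once $Y$ is well defined, the formula for $p^{final}$ follows and the theorem is complete.
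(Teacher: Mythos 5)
Your proof is correct and follows essentially the same route as the paper: both identify the conserved quantity $\sum_{i=1}^n D(\theta_i)^T p_i(t)$ (the paper writes it as $(v^T\otimes I_2)D_{D(\theta_i)}^T p$, which annihilates $\mathcal{L}^{out}=D_{D(\theta_i)}(\mathcal{L}\otimes I_2)$ from the left), evaluate it at $t=0$ and at consensus, and invert $\sum_i D(\theta_i)^T$. The only difference is that you explicitly justify the invertibility via the determinant $(\sum_i\cos\theta_i)^2+(\sum_i\sin\theta_i)^2>0$, a step the paper merely asserts.
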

\begin{proof}
From (\ref{eq_pdll}), it follows that
\begin{align}
 (v^T \otimes I_2) D_{D(\theta_i)}^T   \dot{p} &= -(v^T \otimes I_2) D_{D(\theta_i)}^T D_{D(\theta_i)} ( \mathcal{L} \otimes I_2 ) p(t) \nonumber\\
 & = -(v^T \mathcal{L} \otimes I_2 ) p(t) =0   \label{eq_pdll_2}
\end{align}
which means that $(v^T \otimes I_2) D_{D(\theta_i)}^T   {p}(t_f) = (v^T \otimes I_2) D_{D(\theta_i)}^T   {p}(0) = \sum_{i=1}^n D(\theta_i)^T {p}_i(0)$. When a consensus is achieved as $p^{final}$ at $t = t_f$,  we have $p(t_f) = ( {p^{final}}^T,  {p^{final}}^T, \ldots,  {p^{final}}^T )^T$. Let $W \triangleq (v^T \otimes I_2) D_{D(\theta_i)}^T \in \Bbb{R}^{2 \times 2n}$. Then, we can write  $(v^T \otimes I_2) D_{D(\theta_i)}^T   {p}(t_f)$ at consensus as $(\sum_{i=1}^n  W_{1:2, 2i-1: 2i}) p^{final}$, where $W_{1:2, 2i-1: 2i}$ is the $2 \times 2$ matrix composing of $(2i-1)$-th and $2i$-th column vectors.  Thus, the consensus value is obtained as $p^{final} = (\sum_{i=1}^n  W_{1:2, 2i-1: 2i})^{-1} \sum_{i=1}^n D(\theta_i)^T {p}_i(0)$, where $\sum_{i=1}^n  W_{1:2, 2i-1: 2i}$ is nonsingular if $0< \cos \theta_i$ for all $i \in \mathcal{V}$. Note that $Y(\theta_1, \ldots, \theta_n)  =(\sum_{i=1}^n  W_{1:2, 2i-1: 2i})^{-1}$, which completes the proof.
\end{proof}

\section{Examples} \label{section_examples}

\subsection{Example $1$: Two agents}  \label{section_examples_sub1}
\begin{figure*}[t]
\centering
\includegraphics[width=12cm]{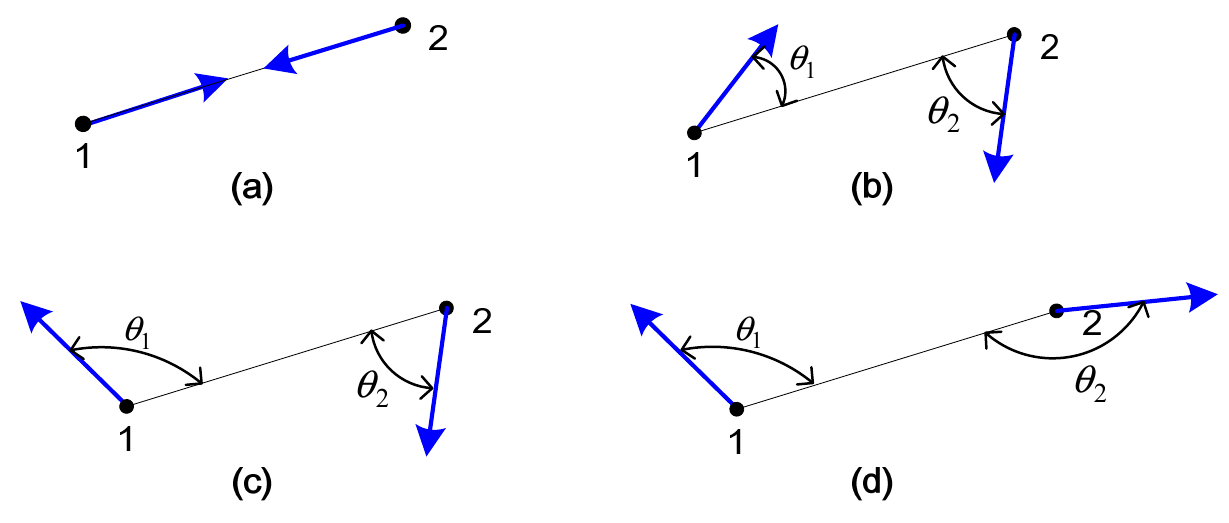}
\caption{Four case of example $1$: According to angles $\theta_1$ and $\theta_2$, the consensus may be not achieved; or the average consensus cannot be ensured.} \label{example1}
\end{figure*}

\subsubsection{Case $1$: With $\theta_1 = \theta_2 =0$} Let us consider the traditional consensus with $\theta_1 = \theta_2 =0$. Agents may try to reach each other along the line connecting two agents. This case is depicted in Fig.~\ref{example1}(a). The eigenvalues of $\mathcal{L}^{out}$ are $0, 0, 2, 2$. Fig.~\ref{example1_case1} shows the simulation results. As expected, the agents are approaching directly. 
\begin{figure*}[t]
\centering
\includegraphics[width=6cm]{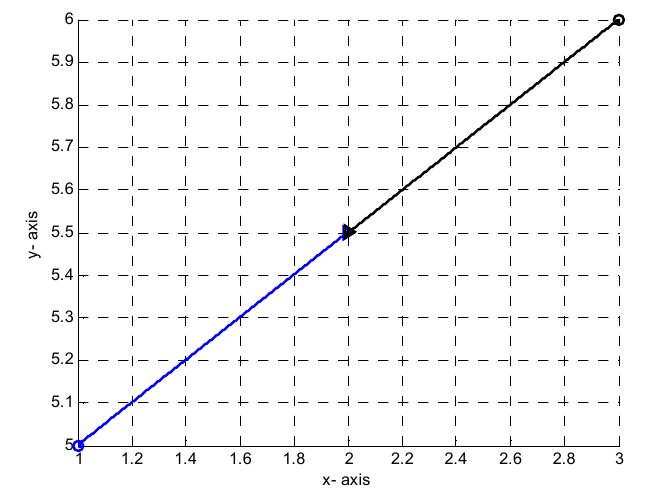}
\caption{Example $1$: Case $1$. The circles are initial positions and the triangles are final positions.} \label{example1_case1}
\end{figure*}

\subsubsection{Case $2$: With $0< \vert \theta_1 \vert, \vert \theta_2 \vert < \pi/2$}
Let us consider the case depicted in Fig.~\ref{example1}(b). With $\theta_1 = \pi/4$ and $\theta_2 =\pi/4$, we have eigenvalues as $0, 0,   1.414 \pm j 1.414$. Thus, the consensus will be achieved well. However, due to the pair of the complex conjugate eigenvalues, the convergence property would be more complicated. Meanwhile, with $\theta_1 = \pi/4$ and $\theta_2 = -\pi/4$, the eigenvalues are obtained as $0, 0, 1.414213562373095, 1.414213562373095$. In this case, since we have only real eigenvalues, the movements may be on the straight lines. But as expected in Fig.~\ref{example1}(b), the average consensus may be not achieved because agents do not move toward the center of the initial positions. As shown in the right-top of Fig.~\ref{example1_case2}, although a consensus is achieved, it is not average. Similarly, with $\theta_1 = \pi/4$ and $\theta_2 = -\pi/3$, we have the eigenvalues as $0, 0, 1.207106781186548 \pm j 0.158918622597891$. But, due to $\theta_1 \neq \theta_2$, the average consensus is not achieved as shown in the left-bottom of Fig.~\ref{example1_case2}. Likewise, with $\theta_1 = \pi/2.5$ and $\theta_2 = -\pi/2.2$ that gives the eigenvalues as $0, 0, 0.451331832648233 \pm j 0.038764925585779$, the consensus point is far away from the initial positions. It is drawn in the right-bottom of Fig.~\ref{example1_case2}.
\begin{figure*}[t]
\centering
\includegraphics[width=12cm]{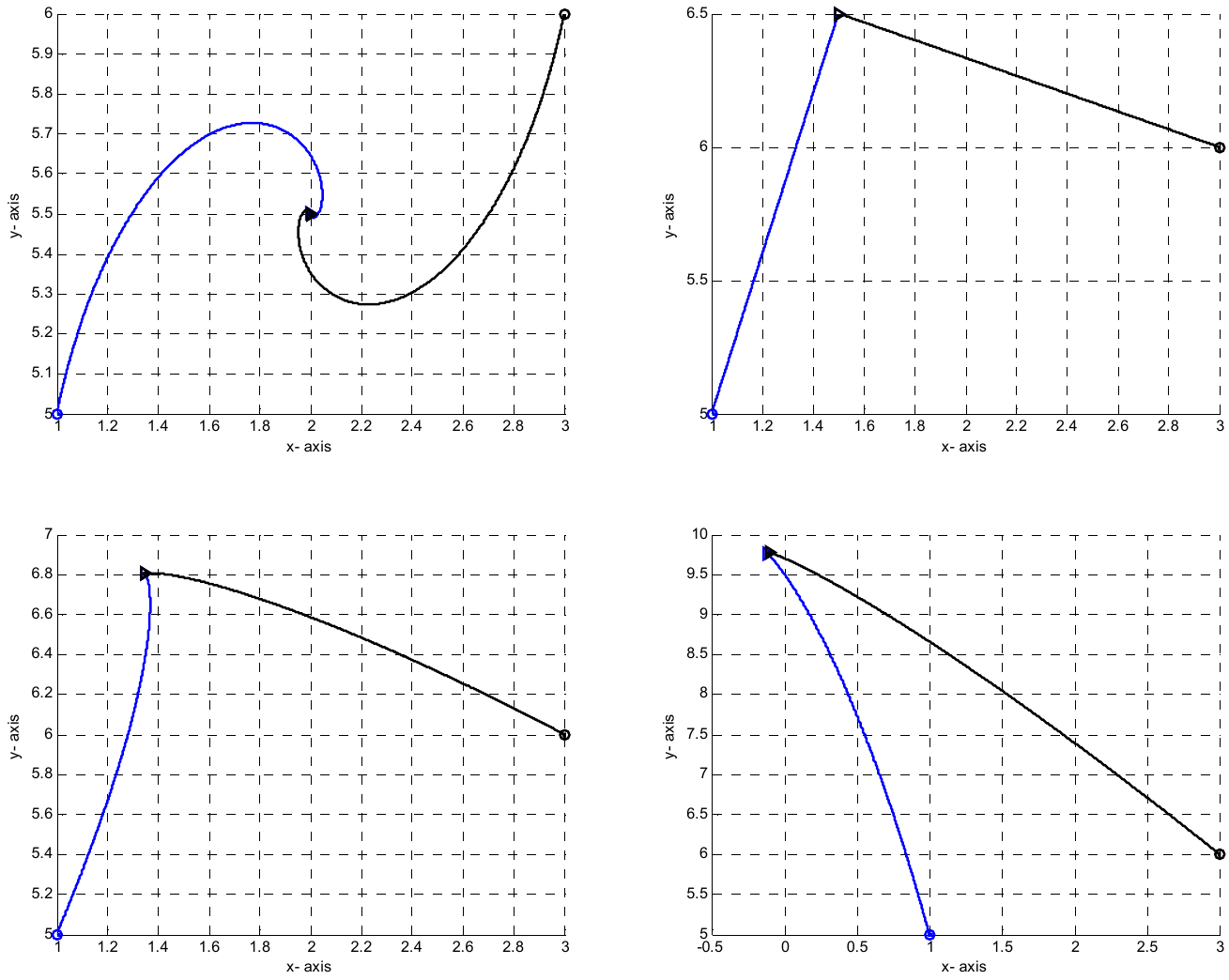}
\caption{Example $1$: Case $2$. Left-top: $\theta_1 = \pi/4$ and $\theta_2 =\pi/4$. Right-top: $\theta_1 = \pi/4$ and $\theta_2 = -\pi/4$. Left-bottom: $\theta_1 = \pi/4$ and $\theta_2 = -\pi/3$. Right-bottom: $\theta_1 = \pi/2.5$ and $\theta_2 = -\pi/2.2$} \label{example1_case2}
\end{figure*}

\subsubsection{Case $3$: With $\pi/2 \leq \theta_1 <\pi$ and $0< \theta_2 \leq \pi/2$} It is the case depicted in Fig.~\ref{example1}(c).
Let us first simulate with $\theta_1 = \pi/2$ and $\theta_2 = -\pi/2$. Since the agents are forced to the same direction, as expected, they are moving in parallel (see the left-top of Fig.~\ref{example1_case3}). With $\theta_1 = \pi/2 + \pi/4$ and $\theta_2 = -\pi/2$, we have the eigenvalues as $0, 0,  -0.707106781186547 \pm j 0.292893218813452$. Since it is unstable, the agents may diverge as shown in the right-top of  Fig.~\ref{example1_case3}. Similarly, with $\theta_1 = \pi/2 + \pi/4$ and $\theta_2 = \pi/2$, we have the eigenvalues as $0, 0,  -0.707106781186547 \pm j 1.707106781186547$. Thus, agents diverge as shown in the left-bottom of Fig.~\ref{example1_case3}. But, with $\theta_1 = \pi/2 +\pi/18$ and $\theta_2 = -\pi/18$, the eigenvalues are $0, 0, 0.811159575345278 \pm j 0.811159575345278$. In this case, the agents converge to a common value; but it is not average consensus as shown in the right-bottom of Fig.~\ref{example1_case3}. 
\begin{figure*}[t]
\centering
\includegraphics[width=12cm]{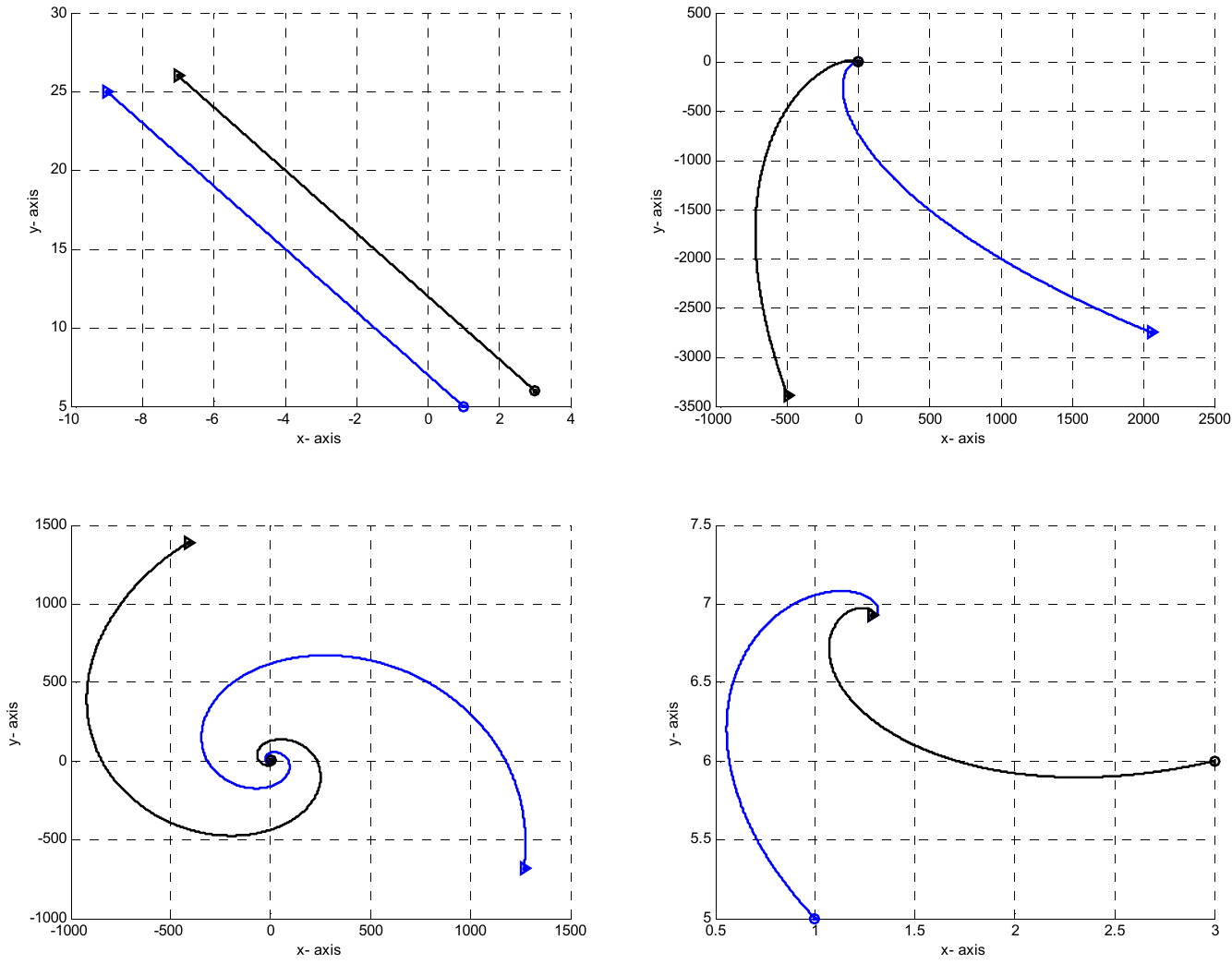}
\caption{Example $1$: Case $3$. Left-top: $\theta_1 = \pi/2$ and $\theta_2 = -\pi/2$. Right-top: $\theta_1 = \pi/2 + \pi/4$ and $\theta_2 = -\pi/2$. Left-bottom: $\theta_1 = \pi/2 + \pi/4$ and $\theta_2 = \pi/2$. Right-bottom: $\theta_1 = \pi/2 +\pi/18$ and $\theta_2 = -\pi/18$} \label{example1_case3}
\end{figure*}

\subsubsection{Case $4$: With $\pi/2< \theta_1 <\pi$ and $\pi/2 < \theta_2 < \pi$} It is the case depicted in Fig.~\ref{example1}(d).
As indicated in Fig.~\ref{example1}(d), agents do not approach each other. So, as expected, an agent is moving far away from other as drawn in Fig.~\ref{example1_case4}. We conduct simulations with $\theta_1 = \pi/2 + \pi/4$ and $\theta_2 = \pi/2 + \pi/4$, which has the eigenvalues $0. 0,  -1.414213562373094 \pm j 1.414213562373095$, and with $\theta_1 = \pi/2 + \pi/4$ and $\theta_2 = -\pi/2 -\pi/4$, which has eigenvalues $0. 0,   -1.414213562373095,  -1.414213562373095$. It is clear that agents do not approach each other, and they are diverging. 
\begin{figure*}[t]
\centering
\includegraphics[width=12cm]{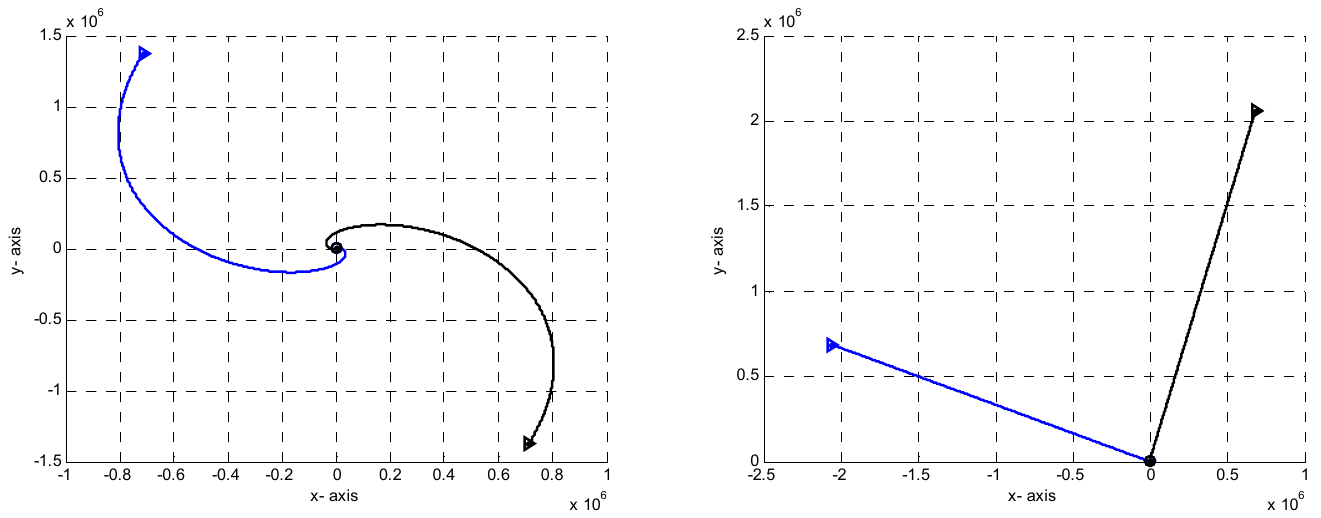}
\caption{Example $1$: Case $4$. Left: $\theta_1 = \pi/2 + \pi/4$ and $\theta_2 = \pi/2 + \pi/4$. Right: $\theta_1 = \pi/2 + \pi/4$ and $\theta_2 = -\pi/2 -\pi/4$} \label{example1_case4}
\end{figure*}

\subsection{Example $2$: Five agents} \label{section_examples_sub2}
As the second example, we consider five agents $1, 2, 3, 4, 5$, with connectivities $a_{1,2}= a_{1,3} = a_{1,4} = a_{1,5} = a_{2,3} = a_{3,5} = a_{4,5}=1$, while with other connectivities of $a_{i,j}=0, \forall i, j,~ i < j$. Table~\ref{example2_eig} shows eigenvalues for the following six cases without two zero eigenvalues.  

\subsubsection{Case 1}
With $\theta_i =0$, it is a traditional consensus, with eigenvalues as shown in Table~\ref{example2_eig}. As shown in the left-top of Fig.~\ref{example2}, the normal average consensus is achieved. 

\subsubsection{Case 2} With $\theta_1=\pi/6$, $\theta_2=-\pi/8$, $\theta_3=\pi/9$, $\theta_4=-\pi/18$, $\theta_5=\pi/25$, all the eigenvalues have positive real parts except two zero eigenvalues. As depicted in the right-top of Fig.~\ref{example2}, a consensus is achieved; but the achieved consensus is not average.

\subsubsection{Case 3}
$\theta_1=\pi/2.1$, $\theta_2=-\pi/2.2$, $\theta_3=\pi/2.1$, $\theta_4=-\pi/2.05$, $\theta_5=-\pi/4$. The eigenvalues have positive real parts; but the magnitude of imaginary values is bigger than the magnitude of real parts. Due to these large imaginary values, the convergence behavior is quite complicated, with a consensus value different from the average value.

\subsubsection{Case 4}
$\theta_1=\pi/6$, $\theta_2=-\pi/2 -\pi/10$, $\theta_3=\pi/9$, $\theta_4=-\pi/18$, $\theta_5=\pi/2 +\pi/10$. There are a pair of eigenvalues located in LHP (see $\lambda_3, \lambda_4$). Due to this unstable eigenvalue, one agent is moving away from the consensus value (see the right-middle of Fig.~\ref{example2}). 

\subsubsection{Case 5}
$\theta_1=\pi/1.8$, $\theta_2=\pi/18$, $\theta_3=0$, $\theta_4=-\pi/18$, $\theta_5=-\pi/8$. The eigenvalues have positive real parts. But, all the eigenvalues have imaginary values. So, the convergence may take a time and the transient behaviors are quite complicated. See the left-bottom of Fig.~\ref{example2}.
  
\subsubsection{Case 6}  
$\theta_1=\pi/2 + \pi/8$, $\theta_2=0$, $\theta_3=0$, $\theta_4=0$, $\theta_5=0$. Although most of eigenvalues are positive real, when an agent that has large connections is misaligned to unstable region, the overall system becomes easily unstable as drawn in right-bottom of Fig.~\ref{example2}.  

\begin{table}
\centering
\caption{Eigenvalues of example $2$}
\label{example2_eig}
\begin{tabular}{ccccc}
\hline
Case & $\lambda_3, \lambda_4$ & $\lambda_5, \lambda_6$ & $\lambda_7, \lambda_8$ &  $\lambda_9, \lambda_{10}$  \\
\hline\hline
Case $1$  &     1.5857, 1.5857        &    3.0, 3.0       &        4.4142, 4.4142       &     4.9999, 4.9999      \\
Case $2$  &    1.6200 $\pm$ j0.3811         &    2.9079 $\pm$ j0.0925       &    4.1838 $\pm$ j0.6911           &    4.3651 $\pm$ j2.0720       \\
Case $3$  &      0.2450 $\pm$ j1.7599        &    0.3534 $\pm$ j4.4676        &     0.7608 $\pm$ j0.7615          &    1.6463 $\pm$ j2.5883       \\
Case $4$  &      -0.0459 $\pm$ j1.4534       &      0.1489 $\pm$ j2.9940      &       2.3166 $\pm$ j0.1147        &  4.2881 $\pm$ j1.9744         \\
Case $5$  &    0.2908 $\pm$ j3.8587          &     1.5750 $\pm$ j0.0556        &        3.2627 $\pm$ j0.2168     &      3.8875 $\pm$ j0.7951      \\
Case $6$  &    -0.5307 $\pm$ j3.6955         &   1.5857, 1.5857        &    3.0, 3.0           &  4.4142, 4.4142         \\ 
\hline
\end{tabular}
\end{table}
\begin{figure*}[!hbt]
\centering
\includegraphics[width=12.5cm]{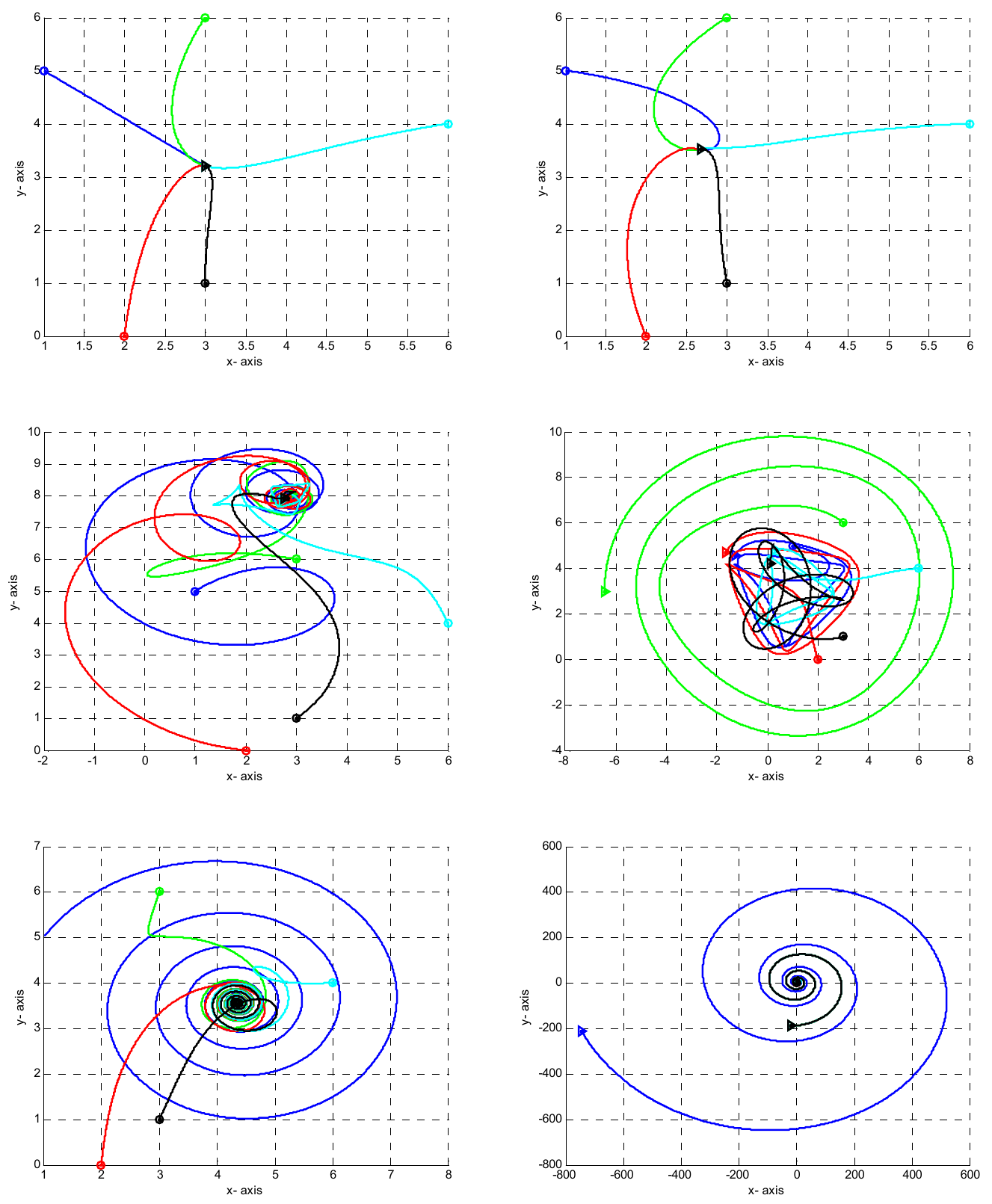}
\caption{Example $2$: Left-top (Case -$1$): $\theta_i =0$. Right-top (Case -$2$): $\theta_1=\pi/6$, $\theta_2=-\pi/8$, $\theta_3=\pi/9$, $\theta_4=-\pi/18$, $\theta_5=\pi/25$. Left-middle (Case -$3$): $\theta_1=\pi/2.1$, $\theta_2=-\pi/2.2$, $\theta_3=\pi/2.1$, $\theta_4=-\pi/2.05$, $\theta_5=-\pi/4$. Right-middle (Case -$4$): $\theta_1=\pi/6$, $\theta_2=-\pi/2 -\pi/10$, $\theta_3=\pi/9$, $\theta_4=-\pi/18$, $\theta_5=\pi/2 +\pi/10$. Left-bottom (Case -$5$): $\theta_1=\pi/1.8$, $\theta_2=\pi/18$, $\theta_3=0$, $\theta_4=-\pi/18$, $\theta_5=-\pi/8$. Right-bottom (Case -$6$): $\theta_1=\pi/2 + \pi/8$, $\theta_2=0$, $\theta_3=0$, $\theta_4=0$, $\theta_5=0$.} \label{example2}
\end{figure*}

\subsection{Example $3$: Three agents under complete graph} \label{section_examples_sub3}
We consider the three agents under complete graph. But for a simplicity, we suppose that agent $1$ has misaligned orientation as $\theta_1$ and agents $2$ and $3$ have the same misalignment angles as $\theta_2 = \theta_3$. Let us first consider $\theta_1 = \pi/1.9$ and $\theta_2 = \theta_3 = -\pi/3$.
As shown in the left plots of Fig.~\ref{example3}, the agents converge to a common point although $\cos\theta_1 <0$. But the converged point is not a average value of initial positions. Next, we slightly change $\theta_1$ as $\theta_1 = \pi/1.6$ with the same $\theta_2 = \theta_3 = -\pi/3$. The right plots of Fig.~\ref{example3} show the trajectories of agents; but in this case, the agents diverge. 

\begin{figure*}[!hbt]
\centering
\includegraphics[width=12cm]{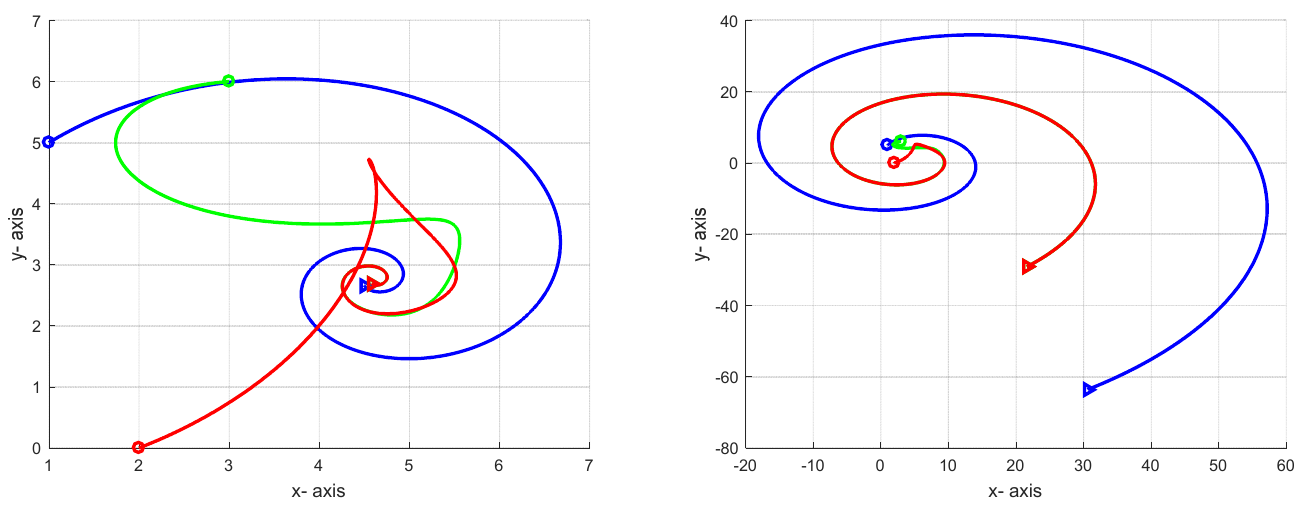}
\caption{Example $3$: Left: $\theta_1 = \pi/1.9$ and $\theta_2 = \theta_3 = -\pi/3$. Right: $\theta_1 = \pi/1.6$ and $\theta_2 = \theta_3 = -\pi/3$} \label{example3}
\end{figure*}

\section{Discussions} \label{section_dis}
As analyzed in Section~\ref{section_analysis}, even with orientation misalignments, the consensus could be achieved if $-\pi/2 < \theta_i <\pi/2$ and the consensus cannot be achieved if $\cos \theta_i < 0,~\forall i \in \mathcal{V}$. Also it was shown that the average consensus is no more ensured when some agents have the orientation misalignments. As remarked in \textit{Remark~\ref{remark_1}}, when some agents have $\theta_i \leq 0$ and some agents have $\theta_i \geq 0$, it is difficult to develop a general result for consensus or non-consensus. With only two agents, as shown in Fig.~\ref{example1_case3}, when $\theta_1 = \pi/2$ and $\theta_2 = -\pi/2$, the agents diverge to infinity. Also with $\theta_1 = 3\pi/4$ and $\theta_2 = -\pi/2$, or  $\theta_1 = 3\pi/4$ and $\theta_2 = \pi/2$, agents diverge. But, with $\theta_1 = \pi/2 +\pi/18$ and $\theta_2 = -\pi/18$, a consensus was achieved.  From these illustrations, it is now observed that when two control input vectors, i.e., $- D(\theta_1) (p_1 - p_2)$ and $- D(\theta_2) (p_2 - p_1)$, are directing opposite directions, the consensus is not achieved (also shown in Fig.~\ref{example1_case4}). But, it is still hard to generalize the results. From the example with five agents, it was also confirmed that the consensus could be achieved if $-\pi/2 < \theta_i <\pi/2$ as illustrated in example $2$-case $2$ and example $2$-case $3$. But, in example $2$-case $4$, when agents $2$ and $5$ have $\theta_2 <0$ and $\theta_5 <0$, respectively, the consensus is not achieved. But, even when again $1$ in example $2$-case $5$ has $\theta_1 <0$, the consensus is achieved, although in case $6$, the consensus was not achieved. Thus, it is clear that the conditions developed in \textit{Theorem~\ref{theorem_three_suff}} are sufficient conditions. 

Obviously the exact condition for a consensus can be achieved by examining eigenvalues of $\mathcal{L}^{out}$ directly. Since the Laplacian matrix $\mathcal{L}^{out}$ is a function of network topology and misaligned orientation angles $\theta_i$, it seems difficult to find the exact locations of eigenvalues (i.e., exact consensus condition) for general graphs. So, for general graphs, the regions defined by \textit{Theorem~\ref{theorem_three_suff}} seem to be best in terms of designing a consensus controller. For example, for two agents, the Laplacian matrix  $\mathcal{L}^{out}$ has eigenvalues as $0, 0, (\cos \theta_1 + \cos \theta_2) \pm j (\sin \theta_1 + \sin \theta_2)$. Thus, a consensus is achieved if and only if $0< \cos \theta_1 + \cos \theta_2$, which is well coincident with the example $1$ of Subsection~\ref{section_examples_sub1}. For three agents under a complete graph in Subsection~\ref{section_examples_sub3}, let us suppose that agent $1$ has misaligned orientation as $\theta_1$ and agents $2$ and $3$ have the same misalignment angles as $\theta_2 = \theta_3$. Then, the eigenvalues of $\mathcal{L}^{out}$ are  $0, 0, 3\cos\theta_2 \pm 3 j \sin\theta_2, 2\cos \theta_1 + \cos\theta_2 \pm j (2 \sin \theta_1 + \sin \theta_2)$. So, if and only if $-\pi/2 < \theta_2 < \pi/2$ and $2\cos \theta_1 + \cos\theta_2>0$, the consensus could be achieved. This analysis is well matched to the simulations given in Subsection~\ref{section_examples_sub3}. However, it is hard to estimate the locations of eigenvalues of general graphs. Hence, it would be recommended to use the sufficient conditions developed in \textit{Theorem~\ref{theorem_three_suff}} for a designing purpose. That is, it is highly recommended to ensure the orientation misalignment errors as $-\pi/2 < \theta_i < \pi/2$.
  


\section{Conclusion}\label{section_conc}
This paper has presented a consensus problem under misalignments of orientations of agents. The three scenarios motivating the misalignment problems, depicted in Fig.~\ref{cood_virtual}, are misaligned orientation errors of local frames, biases in control directions or in sensing directions, and misalignment errors of virtually aligned coordinate frames. We have provided conditions for consensus and added some more analysis related with stability. The locations of eigenvalues have been roughly evaluated by using Gershgorin circle for a block matrix. From the existence of complex eigenvalues locating closely to the imaginary axis, the behaviors of agents are shown to be quite complicated. Also, it was illustrated that the converged consensus point could be quite away from the average of initial positions. Thus, it is highly required to have small misalignment errors; otherwise, the convergence time could be huge and the trajectories of agents become quite complicated. It seems difficult to find exact consensus condition for general graphs. Although we have presented only sufficient conditions, we believe that the conditions could be utilized for a design purpose nicely. In our future efforts, we would be focused on estimating the locations of eigenvalues more tightly in a distributed way. 





\end{document}